\documentclass[preprint,12pt]{elsarticle}
\DeclareSymbolFont{cmbrightop}{OT1}{cmbr}{m}{n}
\DeclareMathSymbol{\sfPsi}{\mathalpha}{cmbrightop}{9}

\DeclareFontFamily{U}{nxlmi}{}
\DeclareFontSubstitution{U}{nxlmi}{m}{it}
\DeclareFontShape{U}{nxlmi}{m}{it}{
	<-6.3>    nxlmi05
	<6.3-8.6> nxlmi07
	<8.6->    nxlmi0
}{}
\DeclareFontShape{U}{nxlmi}{b}{it}{
	<-6.3>    nxlbmi05
	<6.3-8.6> nxlbmi07
	<8.6->    nxlbmi0
}{}
\renewcommand{\partial}{{\text{\usefont{U}{nxlmi}{m}{it}\symbol{64}}\mspace{1mu}}}

\usepackage{amssymb}

\usepackage{bm, bbm}
\usepackage{amsthm, amsmath,cases, upgreek,mathrsfs }
\usepackage{newtxtext, newtxmath}

\biboptions{numbers,compress}

\numberwithin{equation}{section}
\newtheorem{theorem}{Theorem}[section]
\newtheorem{lemma}[theorem]{Lemma}

\newtheorem{proposition}[theorem]{Proposition}
\newtheorem{remark}[theorem]{Remark}

\newcommand{\eps}{\varepsilon}

\newcommand{\dif}{\ensuremath{\,\mathrm{d}}}

\renewcommand{\div}{\mathop{\mathrm{div}}\nolimits}
\newcommand{\curl}{\mathop{\mathrm{curl}}\nolimits}

\journal{Journal of Functional Analysis}

\begin{document}

\begin{frontmatter}



\title{Global well-posedness of solutions for the equations modelling the motion of a rigid body in a bidimensional perfect fluid}


\author{Xiaoguang You}

\affiliation{organization={School of Mathematical Sciences, Jiangxi Science and Technology Normal University},
            city={Nanchang},
            postcode={330038}, 
            state={Jiangxi},
            country={Peoples R China}}

\begin{abstract}
This paper considers a system modelling the evolution of a rigid body immersed in a bidimensional incompressible perfect fluid. In the special case of a disk-shaped rigid body, it was shown by C. Rosier and L. Rosier (2009) that the system admits a unique global solution when the initial fluid velocity $u_0$ belongs to $H^s$ ($s \geqslant 3$) and its vorticity $\curl u_0$ lies in $L^p$ with $1 \leqslant p < 2$. By establishing a Beale–Kato–Majda type bound, we generalize the result by removing the constraint $\curl u_0 \in L^p$ and allowing the rigid body to be of arbitrary shape. Moreover, we obtain an explicit energy bound.
\end{abstract}

\begin{keyword}
	Fluid--body interaction \sep Euler equations \sep  Classical solutions \sep Exterior domain



\end{keyword}

\end{frontmatter}


\section{Introduction}\label{Sec:introduction}
This paper investigates a system that models the motion of a rigid body within an incompressible perfect fluid. The fluid dynamics are governed by the bidimensional Euler equations, while the motion of the rigid body follows from the conservation of linear and angular momentum. More precisely,  let \(\mathcal{S} \subset \mathbb{R}^2\) be a smooth, simply connected, and bounded open set representing the region initially occupied by the rigid body, and set \(\mathcal{F} = \mathbb{R}^2 \setminus \overline{\mathcal{S}}\). We denote by \(\mathcal{S}(t)\) the region occupied by the rigid body and by \(\mathcal{F}(t)\) the region occupied by the fluid at time \(t \geqslant 0\). Let $n(x, t)$ be the unit normal vector field on $\partial\mathcal{S}(t)$, which directs toward the interior of $\mathcal{S}(t)$. The fluid is assumed to be homogeneous, with density \(\rho_f = 1\). We denote by \(u(x,t)\) and \(p(x,t)\) the velocity field and pressure within the fluid, respectively. For any \(t > 0\), the fluid motion is governed by the Euler equations:
\begin{subequations}\label{Equ:euler-1}
	\begin{align}
		\partial_t u + (u \cdot \nabla )u + \nabla p = 0, \quad x \in \mathcal{F}(t),\label{Equ:euler-1a}\\
	\div u = 0,  \quad  x \in \mathcal{F}(t),\label{Equ:euler-1b}
	\end{align}
\end{subequations}
and the evolution law of the rigid body is given by
\begin{subequations}\label{Equ:euler-2}
\begin{align}
	mh''(t) = \int_{\partial \mathcal{S}(t)} p\,n \dif \sigma,\label{Equ:euler-2a}\\
	Jr'(t) = \int_{\partial \mathcal{S}(t)} p\,(x - h(t))^\perp \cdot n \dif \sigma \label{Equ:euler-2b}.
\end{align}
\end{subequations}
Here, we denote \(x^\perp = (-x_2, x_1)\) for \(x = (x_1, x_2) \in \mathbb{R}^2\), and \(\mathrm{d} \sigma\) stands for the integration element along the boundary \(\partial \mathcal{S}(t)\). The quantities \(m\) and \(J\) are the mass and the inertia of the rigid body, respectively. Moreover, \(h(t)\) denotes the position of the body's center of mass, and \(r(t)\) represents the angular velocity of the body.

On the boundary, we assume
\begin{subequations}\label{Equ:euler-3}
	\begin{align}
		u \cdot n =  \big(h' + r(x - h)^\perp\big) \cdot n \text{ on } \partial\mathcal{S}(t),\label{Equ:euler-3a}\\
		\lim_{|x|\rightarrow \infty } u(x, t) = 0\label{Equ:euler-3b},
	\end{align}
\end{subequations}
and the initial conditions of the system are given by
\begin{subequations}\label{Equ:euler-4}
	\begin{align}
		u(x, 0) = u_0(x), \quad x \in \mathcal{F},\label{Equ:euler-4a}\\
		h(0) = 0\in \mathbb{R}^2,\quad \dot{h}(0) = \ell_0 \in \mathbb{R}^2,\quad r(0) = r_0 \in \mathbb{R},\label{Equ:euler-4b}
	\end{align}
\end{subequations}
where we have assumed the initial position of the body being at the origin. 

As is common in many fluid--body interaction problems, the main challenge in establishing the well-posedness of the system \eqref{Equ:euler-1}--\eqref{Equ:euler-4} arises from its nonlinearity, strong coupling, and the fact that the fluid domain is unknown and time-dependent. Early results on global existence relied on strong assumptions on the initial vorticity. Notably, Ortega et al.~\cite{ortega2005,ortega2007} established global existence of smooth solutions under the condition that the initial vorticity belongs to a weighted Sobolev space. Subsequently, for the special case where the rigid body is a unit disk, C.~Rosier and L.~Rosier~\cite{rosier2009}  proved global existence assuming only that the initial velocity lies in \(H^s\) (\(s \geqslant 3\)) and the initial vorticity belongs to \(L^p\) for some \(p \in [1,2)\). These vorticity restrictions originate from the Biot--Savart law, whose kernel exhibits opposite singular behaviour near the boundary and at infinity. In fact, even for a fixed rigid body, Kikuchi~\cite{kikuchi1983exterior} found it necessary to impose similar vorticity conditions to obtain global smooth solutions.

A natural question then arises: for a general domain \(\mathcal S\) (i.e., an arbitrarily shaped smooth rigid body), can the system \eqref{Equ:euler-1}--\eqref{Equ:euler-4} admit a global smooth solution without imposing any additional conditions on the vorticity? The present paper provides an affirmative answer to this question. The crucial point is our novel treatment of the Poisson equation in an exterior domain: through highly delicate calculations, we derive a Beale--Kato--Majda type bound. This estimate, analogous to the whole-space case, requires no additional assumptions on the vorticity.

Before stating the main result, we introduce some functional spaces. Let $\mathcal O$ be an open subset of $\mathbb{R}^2$, we denote by \(W^{s,p}({\mathcal{O}})\) the usual \(L^p\)-based Sobolev spaces of order \(s\), and simply write \(H^s(\mathcal{O})\) when \(p=2\). Let \(C_0^\infty(\mathcal{O})\) denote the space of smooth, compactly supported functions in \(\mathcal{O}\), and let \(H_0^s(\mathcal{O})\) be its closure with respect to the \(H^s\)-norm. The subspace of \(C_0^\infty(\mathcal{O})\) consisting of divergence‑free vector fields is denoted by \(C_{0,\sigma}^\infty(\mathcal{O})\); accordingly, \(L_\sigma^2(\mathcal{O}) := \overline{C_{0,\sigma}^\infty(\mathcal{O})}^{\,\|\cdot\|_{L^2}}\). We will also make use of the homogeneous Sobolev space, defined as
\begin{align*}
	\dot{H}^s(\mathcal{O}) := \Big\{ f \in L_{\mathrm{loc}}^2(\overline{\mathcal{O}}) \ \Big | \ \nabla f \in H^{s-1}(\mathcal{O}) \Big\},
\end{align*}
and \(\dot{H}^1_0(\mathcal{O})\) denotes the subspace of \(\dot{H}^1(\mathcal{O})\) consisting of functions which vanish on \(\partial\mathcal{O}\).

Let $0 < T \leqslant \infty$. Given a functional space $\mathscr Y$ consisting of functions of the spatial variable $x$, we denote by $C\big([0,T); \mathscr Y(\mathcal{F}(t))\big)$ the space of functions $f$ such that $f(\cdot,t) \in \mathscr Y(\mathcal{F}(t))$ for every $t$, and $f$ can be extended to a function in $C\big([0,T); \mathscr Y(\mathbb{R}^2)\big)$. We define the space \(\mathscr X_{s;T}\) by
\begin{equation}
	 \mathscr X_{s; T} = \left\{
	\begin{array}{c|c}
		(u, p, h, r) &
		\begin{array}{l}
			u \in C\big([0, T); H^s(\mathcal{F}(t))\big),\\
			p \in C\big([0, T); \dot{H}^{s}(\mathcal{F}(t))\big), \\
			h \in C^1\big([0, T); \mathbb{R}^2\big), r \in C\big([0, T); \mathbb{R}\big)
		\end{array}
	\end{array}
	\right\}.
\end{equation}

Suppose that \((u, p, h, r) \in \mathscr{X}_{s;T}\) is a solution to the system \eqref{Equ:euler-1}--\eqref{Equ:euler-4}. Then,  for \(0 \leqslant k \leqslant s\) and \(0 \leqslant t < T\), we define the energy \(E_k(t)\) of the system by
\begin{equation}
	E_k(t) = \|u(t)\|_{H^k(\mathcal{F}(t))}^2 + m|\dot{h}(t)|^2 + J|r(t)|^2,
\end{equation}
with \(E_k(0) = \|u_0\|_{H^k(\mathcal{F})}^2 + m|\ell_0|^2 + J|r_0|^2\).

We are now ready to state the main result.
\begin{theorem}\label{thm:well-posedness}
	Let $s \geqslant 3$ be an integer. Suppose that $u_0 \in H^s(\mathcal{F})$ satisfies
	\begin{equation} \label{Equ:boundary-initial}
		\begin{split}
			\div u_0 = 0 \text{ in } \mathcal{F}  \text{ and } 
			u_0 \cdot n = (\ell_0 + r_0\, x^\perp) \cdot n \text{ on } \partial\mathcal{S}.
		\end{split}
	\end{equation}
Then the system \eqref{Equ:euler-1}--\eqref{Equ:euler-4} admits a unique solution $(u, p, h, r) \in \mathscr X_{s;\infty}$.
	Moreover, there exist constants $K_i$ ($i=1,2,3$), independent of $s$, such that for all $t > 0$,
	\begin{align}\label{energy-inequality-thm}
		 E_s(t) \leqslant K_1 E_s(0) \exp \int_0^t \lambda(\tau) \dif \tau.
	\end{align}
	Here $\lambda(\tau)$ does not depend on $s$ and is given by
	\begin{equation}\label{Def:sigma}
		\lambda(\tau) = K_2\bigl(1+\sqrt{E_1(0)}\bigr)
		\bigl(1+\ln^+\sqrt{E_3(0)}\bigr)
		\exp\Bigl(K_3\bigl(1+\sqrt{E_1(0)}\bigr)\tau\Bigr),
	\end{equation}
	where $\ln^+ x = \max(0,\ln x)$.
\end{theorem}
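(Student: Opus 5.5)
We follow the standard continuation strategy: local well-posedness, a priori energy estimates, and a Beale--Kato--Majda type logarithmic bound that closes via Gronwall.

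\emph{Local theory and reduction to body frame.} First we transfer the system to the fixed domain $\mathcal F$ by the rigid change of variables $y = Q(t)^{T}(x-h(t))$, with $Q(t)$ the rotation by angle $\int_0^t r$. The transformed velocity $v$, the translational velocity $\ell = Q^T h'$ and $r$ then satisfy an Euler system on $\mathcal F$. In it, the pressure is determined by a Neumann problem whose data involve $\ell'$ and $r'$. We eliminate $\ell', r'$ through the added-mass (Kirchhoff) potentials $\Phi_i$, $i=1,2,3$. This yields a nonlocal but well-defined quasilinear system, for which local existence and uniqueness in $H^s$, $s\ge 3$, follows by a Kato/iteration argument as in Ortega--Rosier--Takahashi. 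The same argument gives a blow-up criterion: if $T^*<\infty$, then $\int_0^{T^*}\|\nabla u\|_{L^\infty}\,d\tau=\infty$. Uniqueness comes from an $L^2$ stability estimate for differences.

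\emph{Low-order bounds.} Next we collect the conserved or controlled quantities. Total kinetic energy gives $E_0(t)=E_0(0)$. Vorticity $\omega=\curl u$ is transported, so $\|\omega\|_{L^2}$ and $\|\omega\|_{L^\infty}$ are conserved. The circulation around $\partial\mathcal S(t)$ is conserved by Kelvin's theorem. Elliptic estimates for the div-curl system in the exterior domain, with normal trace given by $(\ell,r)$ and prescribed circulation, give $\|u\|_{H^1}\lesssim \|u\|_{L^2}+\|\omega\|_{L^2}+|\ell|+|r|$. Hence $E_1(t)\lesssim E_1(0)$ uniformly; the harmonic part is handled by the $\Phi_i$ and the circulation. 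The appearance of $e^{K_3(1+\sqrt{E_1(0)})\tau}$ in $\lambda$ suggests we then prove $\|\omega(t)\|_{L^\infty}$ bounds and a growth bound for a quantity like $\|\nabla\omega\|$ or $\ln$ of $E_3$. The most natural reading is $\ln(e+E_3(t))\lesssim (1+\ln^+ E_3(0))e^{Ct}$, which we obtain from the double-exponential mechanism below.

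\emph{BKM logarithmic estimate (main obstacle).} The key step is to show, for the exterior domain $\mathcal F$ with tangency data,
\[
\|\nabla u\|_{L^\infty(\mathcal F)}\le C\bigl(1+\|u\|_{L^2}+|\ell|+|r|+\|\omega\|_{L^\infty}\bigl(1+\ln^+\|u\|_{H^3}\bigr)\bigr).
\]
To prove it, we write $u=\nabla^\perp\psi+\sum(\ell_i,r)\nabla\Phi_i+\gamma H$, where $H$ is the harmonic circulation field. The fields $\Phi_i$ and $H$ are smooth with decaying gradients, so their contribution is bounded. It remains to estimate $\nabla^2\psi$ for $\Delta\psi=\omega$ with Dirichlet data (a constant on the boundary). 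We localize with a cutoff $\chi$ near $\mathcal S$. Far away, $(1-\chi)\psi$ solves a whole-space Poisson equation, and the classical BKM splitting of the Biot--Savart kernel into $|x-y|<\delta$, $\delta<|x-y|<1$ and $|x-y|>1$ gives $\|\omega\|_\infty(1+\ln(1/\delta))+\delta\|\nabla\omega\|_\infty+\|\omega\|_{L^2}$. The far region only needs $L^2$, so no $L^p$, $p<2$, assumption is required. Near the boundary we use the Green function of the bounded smooth domain and its Hölder/kernel bounds: $|\nabla^2 G(x,y)|\lesssim|x-y|^{-2}$, together with the cancellation needed for the Calderón--Zygmund part. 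This gives the same logarithmic estimate, and the commutator terms coming from $\chi$ are lower order. The delicate point is making the boundary kernel estimate uniform without integrability of $\omega$ at infinity. We would try the localization first, because it keeps the singularities at the boundary and at infinity separated. Choosing $\delta=\min(1,\|u\|_{H^3}^{-1})$ and using Sobolev embedding completes the estimate.

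\emph{Closing.} Finally, standard commutator/energy estimates in $H^k$ on $\mathcal F$ give $\frac{d}{dt}E_k\le C(1+\|\nabla u\|_{L^\infty}+|\ell|+|r|)E_k$. The boundary terms are controlled by the pressure via the Neumann problem, and the body terms by the $\Phi_i$; the constants do not depend on $k$ in the way needed. For $k=3$, inserting the BKM bound gives $\frac{d}{dt}\ln E_3\le C(1+\sqrt{E_1(0)})(1+\ln^+ E_3)$. Integrating this yields $1+\ln^+E_3(t)\le(1+\ln^+E_3(0))e^{C(1+\sqrt{E_1(0)})t}$. Plugging this back shows $\|\nabla u\|_{L^\infty}\le\lambda(t)$, with $\lambda$ as in \eqref{Def:sigma}. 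Gronwall for $E_s$ then gives \eqref{energy-inequality-thm}, and the blow-up criterion gives $T^*=\infty$.
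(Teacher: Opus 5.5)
Your architecture matches the paper's. You use conservation of $E_0$ and $\|\omega\|_{L^\infty}$, then split the stream function by a cutoff into a whole-plane piece (classical BKM, where the far field costs only $\|\omega\|_{L^2}$) and a compactly supported Dirichlet piece near the body, and close with a logarithmic Gronwall argument.

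\textbf{The genuine gap is the final step.} There you write $\frac{d}{dt}\ln E_3\le C(1+\sqrt{E_1(0)})(1+\ln^+E_3)$ and conclude $\|\nabla u(t)\|_{L^\infty}\le\lambda(t)$. In your BKM bound the logarithm multiplies $\|\omega(t)\|_{L^\infty}=\|\omega_0\|_{L^\infty}$, and $E_1(0)$ does not control this: $u_0\in H^1$ only gives $\omega_0\in L^2$.

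Concretely, take $\ell_0=0$, $r_0=0$ and $u_0(x)=U((x-x_0)/\delta)$, with $U=\nabla^\perp\phi$, $\phi\in C_0^\infty(B(0,1))$ and $|x_0|\geqslant 3$. As $\delta\to0$, $E_1(0)$ stays bounded and $\ln^+\sqrt{E_3(0)}\le C+2\ln(1/\delta)$, so $\lambda(0)\le C(1+\ln(1/\delta))$. But $\|\nabla u_0\|_{L^\infty}=\delta^{-1}\|\nabla U\|_{L^\infty}$, so your bound $\|\nabla u(t)\|_{L^\infty}\le\lambda(t)$ already fails at $t=0$.

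The paper's proof contains the same step: before its last Gronwall argument it asserts $\sqrt{E_0(0)}+\|\omega_0\|_{L^\infty}\le K\sqrt{E_1(0)}$. So it cannot be used to repair yours. What both arguments actually give is the energy bound with $1+\sqrt{E_1(0)}$ replaced by $1+\sqrt{E_0(0)}+\|\omega_0\|_{L^\infty}$, which is at most $C(1+\sqrt{E_3(0)})$. Global existence and uniqueness are unaffected, since any finite bound on $\int_0^T\|\nabla u\|_{L^\infty}$ suffices.

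Likewise, your claim that ``the constants do not depend on $k$'' is unsupported. The commutator constants, and the elliptic constant in $\|D^ku\|_0\lesssim\|\omega\|_{k-1}+\|u\|_0+|\ell|+|r|$, depend on $k$. So independence of $K_1,K_2$ from $s$ is not justified, either in your proposal or in the paper.

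\textbf{Smaller points.} With $s=3$ allowed, you cannot bound $\delta\|\nabla\omega\|_{L^\infty}$ by $\delta\|u\|_{H^3}$, because $\nabla\omega\in H^1(\mathbb R^2)\not\subset L^\infty$. Use $\delta^{\gamma}\|\omega\|_{C^{\gamma}}$ with $\gamma<1$ instead, together with $\|\omega\|_{C^\gamma}\lesssim\|u\|_{H^3}$ and $\delta=\|u\|_{H^3}^{-1/\gamma}$. This is how the paper uses Lemma~\ref{Lm:singular-integral-infty}.

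You should also drop the circulation field $H$, which is not in $L^2$. The stream function of $u-\ell_1\nabla\Phi_1-\ell_2\nabla\Phi_2-r\nabla\Phi_3$ is already constant on $\partial\mathcal S$ with $\nabla\psi\in L^2$, which is exactly what the whole-plane BKM step needs.

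\textbf{Legitimate differences.} Subtracting the Kirchhoff fields instead of the compactly supported corrector $\Lambda$ works, and it leaves the vorticity unchanged. A bounded-domain Green function near the body is a valid substitute for the conformal map with reflection. For the $H^k$ bounds, however, the paper's route is lighter than your velocity estimates with pressure. It differentiates the transported vorticity, so no pressure or boundary terms arise, and it recovers $\|u\|_{H^s}$ from Proposition~\ref{Pro:high-u}.
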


\begin{remark}
	For the case where the rigid body is fixed in the fluid, the global existence of smooth solutions with initial velocity $u_0 \in H^s$ ($s \geqslant 3$) can be established similarly by using the Beale--Kato--Majda type bound obtained in Sect. 3. This extends the work of Kikuchi \cite{kikuchi1983exterior}, where the initial vorticity was assumed to lie in a weighted Sobolev space.
\end{remark}

The remainder of this paper is structured as follows. Sect. 2 presents some preliminary lemmas. In Sect. 3, we study the Poisson equation in exterior domains, where we establish a Beale--Kato--Majda type bound. A priori estimates are then derived in Sect. 4, and the proof of Theorem \ref{thm:well-posedness} is given in Sect. 5.

\section{Preliminaries}\label{Sec:pre}
	Let $\mathbb{D}$ denotes the unit disk centered at the origin. We first introduce a smooth biholomorphism $\mathcal{T}$ between $\mathcal{F}$ and $\mathbb{R}^2\setminus\overline{\mathbb{D}}$, which has the following properties. The proof may be found in \cite{iftimie2003two}. 
\begin{lemma}\label{lm:biholomorphism} There exists a smooth biholomorphism $\mathcal T: \mathcal{F} \rightarrow \mathbb{R}^2\setminus\overline{\mathbb{D}}$ that extends smoothly to $\partial \mathcal{S}$. Moreover, there exists a constant $K$ such that
	\begin{align}\label{Est:T-mapping}
		\|\nabla\mathcal{T}\|_{W^{1,\infty}(\mathcal{F})} + \|\nabla\mathcal{T}^{-1}\|_{W^{1,\infty}(\mathcal{F})} \leqslant K.
	\end{align}
\end{lemma}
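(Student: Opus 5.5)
The plan is to build $\mathcal T$ from the Riemann mapping theorem applied on the Riemann sphere, and then to get the derivative bounds by splitting into a neighbourhood of the boundary and a neighbourhood of infinity.

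\emph{Construction.} Regard $\mathcal F\cup\{\infty\}$ as a simply connected domain on the Riemann sphere. It is a proper subset and its complement $\overline{\mathcal S}$ contains more than one point. After the inversion $z\mapsto 1/(z-z_0)$ with $z_0\in\mathcal S$, it becomes a bounded simply connected domain $\Omega$ with smooth boundary, containing $0$. By the Riemann mapping theorem there is a biholomorphism $\Phi:\Omega\to\mathbb D$ with $\Phi(0)=0$. Composing with the inversions gives
\[
\mathcal T(z)=1/\Phi\bigl(1/(z-z_0)\bigr),
\]
a biholomorphism of $\mathcal F$ onto $\mathbb R^2\setminus\overline{\mathbb D}$ that sends $\infty$ to $\infty$.

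\emph{Boundary regularity.} Since $\partial\Omega$ is $C^\infty$, the Kellogg--Warschawski theorem gives that $\Phi$ extends to a $C^\infty$ diffeomorphism $\overline\Omega\to\overline{\mathbb D}$ with $\Phi'\neq0$ on $\overline\Omega$. Hence $\mathcal T$ extends smoothly up to $\partial\mathcal S$, with $\mathcal T'$ nonvanishing there. On any compact piece $\overline{\mathcal F}\cap\{|z|\le R\}$, $\mathcal T$ and $\mathcal T^{-1}$ are therefore smooth up to the boundary, so $\nabla\mathcal T$, $\nabla^2\mathcal T$, $\nabla\mathcal T^{-1}$, $\nabla^2\mathcal T^{-1}$ are bounded there.

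\emph{Behaviour at infinity.} This is the step that actually needs care. Because $\Phi(0)=0$ and $\Phi'(0)\neq0$, we have the Laurent expansion
\[
\mathcal T(z)=\beta z+\beta_0+\sum_{k\ge1}\beta_k z^{-k},\qquad \beta\neq0,\quad |z|>R .
\]
Consequently $\mathcal T'(z)\to\beta$ and $\mathcal T''(z)=O(|z|^{-3})$ as $|z|\to\infty$, so $\nabla\mathcal T\in W^{1,\infty}$ on $\{|z|>R\}$ as well. The inverse has the same form, $\mathcal T^{-1}(w)=w/\beta+O(1)$, with analogous derivative bounds. Its Jacobian matrix is the inverse of that of $\mathcal T$, namely multiplication by $1/\mathcal T'$. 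Since $|\mathcal T'|$ is bounded below on $\overline{\mathcal F}$ (nonvanishing, continuous up to the boundary, and tending to $|\beta|$ at infinity), $\nabla\mathcal T^{-1}$ and its first derivatives are bounded.

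The estimate \eqref{Est:T-mapping} should be read with $\mathcal T^{-1}$ evaluated on its natural domain $\mathbb R^2\setminus\overline{\mathbb D}$, or equivalently as $\nabla\mathcal T^{-1}\circ\mathcal T$ on $\mathcal F$. Either reading follows from the bounds above. Combining the compact-region and far-field bounds gives the constant $K$. The main obstacle is the boundary smoothness, and for it we simply invoke Kellogg--Warschawski, as in \cite{iftimie2003two}.
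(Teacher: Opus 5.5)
Your argument is correct and is essentially the standard proof behind the reference \cite{iftimie2003two}, which the paper cites instead of giving a proof. That proof applies the Riemann mapping theorem after an inversion and uses Kellogg--Warschawski for smoothness up to $\partial\mathcal S$; the expansion $\mathcal T(z)=\beta z+O(1)$ at infinity, together with the lower bound on $|\mathcal T'|$, then bounds $\nabla\mathcal T$, $\nabla^2\mathcal T$ and the derivatives of $\mathcal T^{-1}$. Your reading of $\nabla\mathcal T^{-1}$ on $\mathbb R^2\setminus\overline{\mathbb D}$ (or composed with $\mathcal T$) also matches how the paper later uses the lemma.
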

Using the biholomorphism $\mathcal{T}$, we obtain an explicit formula for the Green's function of the Laplacian in $\mathcal{F}$:
\begin{align}
	G_\mathcal{F}(x, y) = \frac{1}{2\pi} \ln \frac{|\mathcal{T}(x) - \mathcal{T}(y)|}{|\mathcal{T}(x)- (\mathcal{T}(y))^*||\mathcal{T}(y)|},
\end{align}
where $\xi^*$ denotes
\[
\xi^* := \frac{\xi}{|\xi|^2}, \quad \forall \xi \in \mathbb{R}^2\setminus\mathbb{D}.
\]

Let us now introduce a Cauchy principal-value integral operator ${P}_{ij}$,  $i,j=1, 2$, defined by
\begin{equation}
	{P}_{ij}f(\eta) = \lim_{\eps \rightarrow 0}\int_{|\eta-\xi|>\eps} {\overline{\mathcal{K}}_{ij}}(\eta,\xi) f(\xi)d\xi,
\end{equation}
where the singular kernel $\overline{\mathcal{K}}_{ij}$ is given by
\begin{equation}\label{Def:p-k} 
	\overline{\mathcal{K}}_{ij}(\eta,\xi):=\frac{1}{2\pi}\partial_{\eta_i}\bigg(\frac{\eta_j-\xi_j}{|\eta-\xi|^2}\bigg).
\end{equation}

Lemma 4.6 of \cite{majda2002vorticity} shows that the operator ${P}_{ij}$ satisfies the following property.
\begin{lemma}\label{Lm:singular-integral-infty} Let $f$ be a scalar function supported in a bounded domain $\mathcal A$ (with measure denoted by $m_{\mathcal A}$). Then there exists a constant $K$, independent of $f$, such that for every $\epsilon > 0$ and $0 < \gamma < 1$, we have
	\begin{equation}\label{Est:pv-infty}
		\|{P}_{ij}f\|_{L^\infty(\mathbb{R}^2)} \leqslant K\big(\|f\|_{C^{\gamma}(\mathcal{A})}\eps^\gamma + \max\big(1,\ln ({\sqrt{m_{\mathcal A}}/{\eps}})\big)\|f\|_{L^\infty(\mathcal A)} \big).
	\end{equation}
\end{lemma}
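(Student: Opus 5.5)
The plan is the classical splitting of the principal-value integral into a near-field part, where the cancellation of the kernel is used together with the Hölder continuity of $f$, and a far-field part, where only boundedness of $f$ and the support restriction are used. Fix $\eta\in\mathbb{R}^2$ and $\eps>0$. Set $R:=\max(\eps,\sqrt{m_{\mathcal A}})$ and write
\[
P_{ij}f(\eta)=\lim_{\delta\to0}\int_{\delta<|\eta-\xi|<\eps}\overline{\mathcal K}_{ij}(\eta,\xi)f(\xi)\dif\xi+\int_{\eps\leqslant|\eta-\xi|\leqslant R}\overline{\mathcal K}_{ij}f\dif\xi+\int_{|\eta-\xi|>R}\overline{\mathcal K}_{ij}f\dif\xi =: I_1+I_2+I_3 .
\]
Throughout, we use that $\overline{\mathcal K}_{ij}(\eta,\xi)$ is homogeneous of degree $-2$ in $\eta-\xi$, with $|\overline{\mathcal K}_{ij}|\leqslant K|\eta-\xi|^{-2}$. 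We also use that its mean over every circle centred at $\eta$ vanishes. Indeed, it is $\frac{1}{2\pi}\bigl(\delta_{ij}|z|^2-2z_iz_j\bigr)/|z|^4$, and the angular integral of $\delta_{ij}-2\omega_i\omega_j$ is zero.

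\emph{Near field.} By the cancellation property, $\int_{\delta<|\eta-\xi|<\eps}\overline{\mathcal K}_{ij}(\eta,\xi)\dif\xi=0$, so in $I_1$ we may replace $f(\xi)$ by $f(\xi)-f(\eta)$. Here we interpret $\|f\|_{C^\gamma}$ as the Hölder seminorm of $f$ extended by zero, as in \cite{majda2002vorticity}; this is the only delicate point near $\partial\mathcal A$. Then $|f(\xi)-f(\eta)|\leqslant\|f\|_{C^\gamma}|\eta-\xi|^\gamma$, and
\[
|I_1|\leqslant K\|f\|_{C^\gamma}\int_0^\eps \rho^{\gamma-2}\rho\dif\rho=\frac{K}{\gamma}\|f\|_{C^\gamma}\eps^\gamma .
\]
This also shows that the limit defining $P_{ij}f$ exists. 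The factor $1/\gamma$ is harmless for fixed $\gamma$, or it can be absorbed into $K$ as in the source lemma.

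\emph{Intermediate annulus.} This term is nonzero only when $\eps<\sqrt{m_{\mathcal A}}$. There
\[
|I_2|\leqslant K\|f\|_{L^\infty}\int_\eps^R\rho^{-1}\dif\rho=K\ln\bigl(\sqrt{m_{\mathcal A}}/\eps\bigr)\|f\|_{L^\infty}.
\]

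\emph{Far field.} Since $f$ is supported in $\mathcal A$, we have
\[
|I_3|\leqslant K\|f\|_{L^\infty}\int_{\mathcal A\cap\{|\eta-\xi|>R\}}|\eta-\xi|^{-2}\dif\xi\leqslant K\|f\|_{L^\infty}R^{-2}m_{\mathcal A}\leqslant K\|f\|_{L^\infty}.
\]
Summing the three bounds gives $K\bigl(\|f\|_{C^\gamma}\eps^\gamma+\max(1,\ln(\sqrt{m_{\mathcal A}}/\eps))\|f\|_{L^\infty}\bigr)$ uniformly in $\eta$, which is the claimed estimate.

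The main obstacle is the near-field step, which is the only place the cancellation of the kernel is used. One must check carefully that the vanishing spherical mean legitimately allows subtracting $f(\eta)$, including when $\eta\notin\mathcal A$ or $\eta$ lies near $\partial\mathcal A$. This is why the Hölder norm must be understood globally, for the zero extension. The far field is elementary once we observe that the measure bound replaces any decay information on $f$.
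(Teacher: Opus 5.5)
Your proof is correct and is the standard near-field/annulus/far-field splitting behind Lemma 4.6 of \cite{majda2002vorticity}, which the paper simply cites without proof; choosing $R=\max(\eps,\sqrt{m_{\mathcal A}})$ and using $R^{-2}\|f\|_{L^1}\leqslant\|f\|_{L^\infty}$ is exactly how $m_{\mathcal A}$ enters the stated bound. Your two caveats are genuine limitations of the statement, not gaps in your argument. First, the factor $1/\gamma$ cannot be removed: a $\gamma$-uniform $K$ would, letting $\gamma\to0$ with $\eps=\sqrt{m_{\mathcal A}}$, give $\|P_{ij}f\|_{L^\infty}\leqslant C\|f\|_{L^\infty}$ for all Lipschitz $f$ supported in $\mathcal A$, which is false. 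Second, the H\"older control of the zero extension is automatic when $\operatorname{supp}f$ is compact in $\mathcal A$ (compare $f(\xi)$ with the value $0$ at a point of $\mathcal A\setminus\operatorname{supp}f$ on the segment $[\xi,\eta]$), but it fails when $f$ does not vanish on $\partial\mathcal A$, as happens for the paper's $\widetilde\omega$ on $\partial\mathbb D$.
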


We also need a potential theory estimate for solenoidal fields in $\mathbb{R}^2$, the proof of which can be found in Proposition 3.8 of \cite{majda2002vorticity}. 
\begin{lemma}\label{Lm:whole-nabla} Let $v\in H^3(\mathbb{R}^2)$ be a divergence-free vector field, and denote its curl by $\omega = \curl v$. Then
	\begin{equation}
		\|\nabla v\|_{L^\infty} \leqslant K(1 +  \ln^+\|v\|_{3})(1 + \|\omega\|_{L^\infty}).
	\end{equation}
\end{lemma}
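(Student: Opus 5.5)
The estimate is the classical whole-space logarithmic bound of Beale--Kato--Majda type, and I would prove it by splitting the velocity gradient into near-field and far-field parts via the Biot--Savart law. Since $v$ is divergence-free and decays, we have $v = \nabla^\perp \psi$ with $\Delta\psi = \omega$. Hence
\begin{equation*}
\partial_i v_j(x) = \mathrm{p.v.}\int_{\mathbb{R}^2} \overline{\mathcal K}(x,y)\,\omega(y)\dif y + c\,\omega(x),
\end{equation*}
where the kernel is a component of $\overline{\mathcal K}_{ij}$ (up to the perp rotation) and $c\,\omega(x)$ is the delta term produced when the principal value is taken. The delta term is bounded by $K\|\omega\|_{L^\infty}$. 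It therefore suffices to bound the principal-value integral at a fixed point $x$.

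I would split the integral with a smooth cutoff into three regions: $|x-y|<\eps$, $\eps\leqslant|x-y|\leqslant 1$, and $|x-y|>1$.

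On the inner ball, the kernel has zero mean over circles. This lets me replace $\omega(y)$ by $\omega(y)-\omega(x)$, which gives a bound $K\eps^\gamma\|\omega\|_{C^\gamma}$.

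On the annulus, $|\overline{\mathcal K}|\lesssim |x-y|^{-2}$, so the contribution is at most $K\ln(1/\eps)\|\omega\|_{L^\infty}$. This is essentially the content of Lemma \ref{Lm:singular-integral-infty} with $\mathcal A$ a unit-scale ball.

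On the far region, the contribution cannot be bounded by $\|\omega\|_{L^\infty}$, since $\omega$ need not be integrable. Instead I would integrate by parts, moving one derivative from the kernel onto $v$ (writing $\omega=\curl v$). The resulting kernel decays like $|x-y|^{-3}$, which is in $L^2$ away from the origin. The boundary term on $|x-y|=1$ is bounded by $\|v\|_{L^\infty}$. Altogether the far field is bounded by $K\|v\|_{L^2}+K\|v\|_{L^\infty}\leqslant K\|v\|_{3}$ after Cauchy--Schwarz and Sobolev embedding.

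Collecting the pieces gives
\begin{equation*}
\|\nabla v\|_{L^\infty}\leqslant K\bigl(\eps^\gamma\|\omega\|_{C^\gamma}+(1+\ln^+(1/\eps))\|\omega\|_{L^\infty}+\|v\|_3\bigr).
\end{equation*}
By Sobolev embedding $H^2\subset C^\gamma$, we have $\|\omega\|_{C^\gamma}\leqslant K\|v\|_3$. Choosing $\eps=\min(1,\|v\|_3^{-1/\gamma})$ makes the first term $O(1)$ and turns the logarithm into $K(1+\ln^+\|v\|_3)$.

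The main obstacle is the leftover term $\|v\|_3$ from the far field, which is not logarithmic. To remove it, I would keep the cutoff radius $R$ as a second free parameter in place of $1$. The far-field piece then gives $R^{-1}\|v\|_{L^2}$ rather than $\|v\|_3$, and the annulus gives $\ln(R/\eps)\|\omega\|_{L^\infty}$. Taking $R=\max(1,\|v\|_{L^2})\leqslant \max(1,\|v\|_3)$ makes the far field $O(1)$ while adding only another $\ln^+\|v\|_3$ to the logarithm. This yields $\|\nabla v\|_{L^\infty}\leqslant K(1+\ln^+\|v\|_3)(1+\|\omega\|_{L^\infty})$.
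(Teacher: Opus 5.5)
Your argument is correct and is the standard Beale--Kato--Majda potential-theory proof: the near field is handled by the mean-zero kernel and H\"older continuity, the intermediate annulus produces the logarithm as in Lemma~\ref{Lm:singular-integral-infty}, the far field uses decay, and $\eps$ and $R$ are then chosen in terms of $\|v\|_3$. The paper gives no proof of this lemma and simply cites Proposition~3.8 of \cite{majda2002vorticity}, which rests on this same splitting, so your route is essentially the paper's.

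One bookkeeping point needs fixing. With a sharp cutoff at radius $R$, the boundary term you mention becomes $R^{-1}\|v\|_{L^\infty}$, and your choice $R=\max(1,\|v\|_{L^2})$ does not control it. Take $R=\max(1,\|v\|_3)$ instead. Alternatively, skip the integration by parts entirely: in two dimensions $|z|^{-2}\in L^2(\{|z|>R\})$, so Cauchy--Schwarz bounds the far field directly by $K R^{-1}\|\omega\|_{L^2}\leqslant K R^{-1}\|v\|_3$.
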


We end this section with a result on the characterization of harmonic vector fields in bidimensional exterior domains, whose proof is given in \cite{hieber2020characterization}.
\begin{lemma}\label{lm:harmoinc} Suppose that $u \in L^2_\sigma(\mathcal{F})$ is irrotational. Then $u \equiv 0$ in $\mathcal{F}$.
\end{lemma}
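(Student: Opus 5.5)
The plan is to argue by density and energy. The first step is to check that irrotational fields in $L^2_\sigma(\mathcal{F})$ are orthogonal in $L^2$ to everything in $L^2_\sigma(\mathcal{F})$, and then to use the structure of $L^2_\sigma$ in a two-dimensional exterior domain to force $u$ to vanish. The lemma is cited from \cite{hieber2020characterization}, but here is how I would reconstruct it.

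\textbf{Step 1: local stream function.} Since $u\in L^2_\sigma(\mathcal{F})$, we have $\div u=0$ in the sense of distributions and $u\cdot n=0$ on $\partial\mathcal S$ in the weak trace sense. Because $\curl u=0$ as well, Weyl's lemma makes $u$ smooth and harmonic in $\mathcal F$. The divergence-free condition together with the vanishing normal trace means that the flux of $u$ across $\partial\mathcal S$, and hence across any circle $|x|=R$ enclosing $\mathcal S$, is zero. Therefore the $1$-form $u^\perp\cdot dx$ (for the stream function) is exact in $\mathcal F$, and there is a single-valued $\psi$ with $u=\nabla^\perp\psi$. This $\psi$ is harmonic, is constant on $\partial\mathcal S$ (normalise it to $0$ there), and satisfies $\nabla\psi\in L^2(\mathcal F)$.

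\textbf{Step 2: a Liouville-type argument with cutoff.} I would then test against $\psi\chi_R$, where $\chi_R(x)=\chi(x/R)$ is a smooth cutoff with $\chi_R\equiv 1$ on $B_R$ and supported in $B_{2R}$. Integration by parts gives
\[
\int_{\mathcal F}|\nabla\psi|^2\chi_R\,dx=-\int_{\mathcal F}\psi\,\nabla\psi\cdot\nabla\chi_R\,dx ,
\]
and the boundary term vanishes because $\psi=0$ on $\partial\mathcal S$. The difficulty is that $\psi$ itself need not lie in $L^2$; in two dimensions a function with $\nabla\psi\in L^2$ may grow like $\sqrt{\ln|x|}$. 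I would handle this by subtracting the mean of $\psi$ on the annulus $A_R=B_{2R}\setminus B_R$ and applying Poincaré's inequality there:
\[
\|\psi-\bar\psi_R\|_{L^2(A_R)}\le CR\|\nabla\psi\|_{L^2(A_R)}.
\]
This gives $\bigl|\int(\psi-\bar\psi_R)\nabla\psi\cdot\nabla\chi_R\bigr|\le C\|\nabla\psi\|^2_{L^2(A_R)}\to0$. The leftover term $\bar\psi_R\int\nabla\psi\cdot\nabla\chi_R$ becomes, after integrating by parts back, $\bar\psi_R$ times the flux of $\nabla\psi$ through $\partial\mathcal S$. That flux equals the circulation of $u$ around the body, which need not vanish a priori.

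\textbf{Step 3: killing the circulation, which is the main obstacle.} This is where membership in $L^2_\sigma$, and not merely being divergence-free with zero normal trace, must enter. My first attempt would use the explicit harmonic field $H=\nabla^\perp G$, where $G(x)=\frac{1}{2\pi}\ln|\mathcal T(x)|$. This $G$ vanishes on $\partial\mathcal S$, so $H$ is tangent to the boundary and carries unit circulation. But $H$ decays only like $1/|x|$ and so is not in $L^2$. In fact any irrotational, divergence-free, tangent field that decays at infinity is a multiple of $H$ plus a field with zero circulation. Writing $u=cH+\nabla^\perp\phi$, with $\phi=\psi-cG$ harmonic, $\phi=0$ on $\partial\mathcal S$, and zero flux at infinity, I would redo Step 2 for $\phi$. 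The condition $u\in L^2$ then forces $c=0$: as $R\to\infty$, $\int_{B_R\cap\mathcal F}|cH|^2\sim \frac{c^2}{2\pi}\ln R$, while the cross terms stay bounded via the Laurent expansion of the harmonic conjugate at infinity. With $c=0$, Step 2 applied to $\phi=\psi$, with zero flux, yields $\int|\nabla\psi|^2=0$. So $\psi$ is constant and $u\equiv0$.

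The delicate point I expect to spend the most effort on is justifying the expansion at infinity of the harmonic function $\psi$ with $\nabla\psi\in L^2$. Concretely, this means showing $\psi = a\ln|x|+b+O(1/|x|)$ by Kelvin transform or Laurent series in the exterior of a large disk, and then concluding $a=0$ from $\nabla\psi\in L^2$.
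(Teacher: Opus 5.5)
Your proof is correct. The paper gives no argument of its own for this lemma: it imports it from \cite{hieber2020characterization}, which treats harmonic $L^r$ fields in planar exterior domains and so also covers the $p>2$ counterexamples in the subsequent remark. What you supply is a self-contained proof of the case $r=2$ with a single boundary curve.

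Steps 1 and 2 are sound and reduce everything to the circulation $\gamma$, through $\int_{\mathcal F}|\nabla\psi|^2\chi_R\dif x=\pm\gamma\,\bar\psi_R+o(1)$.

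Step 3, however, is much heavier than necessary. Since $\curl u=0$, the circulation $\gamma=\oint_{|x|=r}u\cdot\tau\dif\sigma$ is the same for all $r\geqslant R_0$, where $\overline{\mathcal S}\subset B(0,R_0)$. Cauchy--Schwarz gives $\gamma^2\leqslant 2\pi r\oint_{|x|=r}|u|^2\dif\sigma$, hence
\[
\int_{|x|>R_0}|u|^2\dif x\ \geqslant\ \frac{\gamma^2}{2\pi}\int_{R_0}^{\infty}\frac{\dif r}{r}.
\]
So $u\in L^2$ forces $\gamma=0$, with no need for $H$, for $\mathcal T$, or for the Laurent expansion you flagged as the delicate point. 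Even within your own plan, the coefficient $a$ in your last paragraph is just $\gamma/2\pi$. The expansion of $\psi$ alone therefore already finishes the argument, and the splitting $u=cH+\nabla^\perp\phi$ is redundant.

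For the same reason, what enters is square-integrability at infinity, not ``$L^2_\sigma$ rather than merely divergence-free with zero normal trace.'' For $L^2$ fields on an exterior domain these two classes coincide. $H$ is excluded only because it misses $L^2$ by exactly this logarithm, which is also why it belongs to $L^p$ for every $p>2$.

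Finally, the orthogonality plan announced in your first sentence, which your steps never carry out, can be completed directly and gives the shortest proof:
\begin{itemize}
\item Each $v\in C^\infty_{0,\sigma}(\mathcal F)$ equals $\nabla^\perp\eta$ with $\eta=0$ near infinity and $\eta=c$ near $\partial\mathcal S$.
\item Replace $\eta$ by $\eta-c\,\zeta_R$, where $\zeta_R$ is a logarithmic cutoff equal to $1$ on $B(0,R)$. Then $\|\nabla\zeta_R\|_{L^2}^2\sim 2\pi/\ln R\to0$, a genuinely two-dimensional effect.
\item This shows that $\{\nabla^\perp\eta:\eta\in C_0^\infty(\mathcal F)\}$ is dense in $L^2_\sigma(\mathcal F)$.
\item Since $\curl u=0$ says precisely that $u$ is orthogonal to this set, $u\perp u$.
\end{itemize}
The citation buys the full $L^r$ picture; your route, especially in this streamlined form, buys an elementary and self-contained argument.
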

	\begin{remark} Considering that a bidimensional exterior domain is not simply connected, the above conclusion is not immediately obvious. In fact, for $p >2$, there exists $u \neq 0$ in $L_\sigma^p(\mathcal{F})$ such that $\curl u \equiv 0$ in $\mathcal{F}$. For more details, we refer to \cite{hieber2020characterization}.
	\end{remark}

\section{The Poisson equation in an exterior domain}
Throughout this section, in order to simplify notation, all norms for Sobolev spaces are understood to be on the domain $\mathcal{F}$ unless otherwise specified. With loss of generality, we assume $\mathcal S \subset B(0, 1)$. For $R > 1$, let $\mathcal C_{R}$ denotes the annulus $\mathcal{F} \cap B(0, R)$, and define $ \mathcal C_{R_1, R_2}$ by
\begin{equation*}
	C_{R_1, R_2} = \big\{x\in \mathcal{F} \ \big| \  R_1 < |x| < R_2 \big\} , \ 0 < R_1 < R_2 < \infty.
\end{equation*} 

  We consider the Poisson problem
\begin{align}\label{Equation:Poisson}
	\begin{cases}
		\Delta \psi = \omega \hbox{ in } \mathcal{F},\\
		\psi = 0 \hbox{ on } \partial \mathcal{S}.
	\end{cases}
\end{align}

When $\omega \in L^2(\mathcal{F})$ has compact support, the following existence result holds.
\begin{proposition}[Existence]\label{Proposition:Poisson} Let $R > 0$ be fixed. Suppose that $\omega \in L^2(\mathcal{F})$  is supported in $\overline{\mathcal{C}_{{R}}}$. Then there exists a unique solution $\psi \in \dot{H}^1_0(\mathcal{F})$ to Eqs.~\eqref{Equation:Poisson}. Moreover, we have the following estimate
	\begin{align}\label{Estimate:Poisson-1}
		\|\nabla \psi\|_{1} \leqslant K_{R} \|\omega\|_{0},
	\end{align}
	where $K_{R}$ is a constant that depends on $R$. 
\end{proposition}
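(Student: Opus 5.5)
We will treat Eqs.~\eqref{Equation:Poisson} variationally in the space $\dot H^1_0(\mathcal{F})$, equipped with the seminorm $\|\nabla\cdot\|_{0}$. The first task is to check that this seminorm is a norm on $\dot H^1_0(\mathcal{F})$, i.e.\ that it controls $\psi$ locally. Since $\psi=0$ on $\partial\mathcal S$, a Poincaré inequality on each bounded annulus $\mathcal C_{R'}$ gives
\begin{equation*}
\|\psi\|_{L^2(\mathcal C_{R'})}\leqslant K_{R'}\|\nabla\psi\|_{L^2(\mathcal C_{R'})}.
\end{equation*}
Hence $\nabla\psi=0$ forces $\psi\equiv0$, and $(\dot H^1_0(\mathcal{F}), \|\nabla\cdot\|_{0})$ is a Hilbert space. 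Completeness follows because a Cauchy sequence in this norm converges in $L^2_{\mathrm{loc}}$ by the local Poincaré bound, and the boundary trace passes to the limit.

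We then look for $\psi$ satisfying
\begin{equation*}
\int_{\mathcal F}\nabla\psi\cdot\nabla\phi\dif x=-\int_{\mathcal F}\omega\phi\dif x,\qquad \forall\,\phi\in\dot H^1_0(\mathcal F).
\end{equation*}
The right-hand side is a bounded functional. Indeed, $\omega$ is supported in $\overline{\mathcal C_R}$, so by the Poincaré inequality on $\mathcal C_R$ (taking $R>1$ without loss of generality),
\begin{equation*}
\Big|\int\omega\phi\Big|\leqslant\|\omega\|_0\|\phi\|_{L^2(\mathcal C_R)}\leqslant K_R\|\omega\|_0\|\nabla\phi\|_0.
\end{equation*}
Riesz's representation theorem (Lax--Milgram) then gives a unique solution, together with the bound $\|\nabla\psi\|_0\leqslant K_R\|\omega\|_0$. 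Uniqueness also holds in the class $\dot H^1_0$ itself: the difference $\delta$ of two solutions is harmonic, and testing with $\delta$ gives $\nabla\delta=0$, hence $\delta=0$. The one step here that needs care is justifying the use of $\delta$ as a test function, i.e.\ density of compactly supported functions in $\dot H^1_0(\mathcal F)$ for the gradient norm. In two dimensions this follows from the usual logarithmic cutoff $\chi_N(x)=\min(1,\max(0,2-\ln|x|/\ln N))$ type argument, whose gradient has $L^2$ norm $O((\ln N)^{-1/2})$; for a function $\psi$ that merely has $\nabla\psi\in L^2$, one combines this with a truncation of $\psi$. I expect this density/logarithmic-cutoff step to be the main obstacle, since functions in $\dot H^1$ in 2D may grow like $\sqrt{\ln|x|}$.

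It remains to bound the second derivatives, $\|\nabla^2\psi\|_0\leqslant K_R\|\omega\|_0$. We split the domain with a smooth cutoff $\theta$ equal to $1$ on $B(0,2R)$ and vanishing outside $B(0,3R)$.

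Near the body, $\theta\psi$ solves a Dirichlet problem in the bounded smooth domain $\mathcal C_{3R}$:
\begin{equation*}
\Delta(\theta\psi)=\theta\omega+2\nabla\theta\cdot\nabla\psi+\psi\Delta\theta .
\end{equation*}
Standard $H^2$ elliptic regularity on $\mathcal C_{3R}$, combined with the local Poincaré bound on $\psi$, bounds $\|\nabla^2(\theta\psi)\|_0$ by $K_R\|\omega\|_0$.

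Far away, $(1-\theta)\psi$ extended by zero lies in $\mathbb R^2$ and satisfies
\begin{equation*}
\Delta\big((1-\theta)\psi\big)=-2\nabla\theta\cdot\nabla\psi-\psi\Delta\theta,
\end{equation*}
which is an $L^2$ function supported in the annulus $2R<|x|<3R$ (note $\omega=0$ there). In the whole plane we use the identity $\|\nabla^2 f\|_{L^2}=\|\Delta f\|_{L^2}$, obtained by Fourier transform or by integrating by parts twice. This identity has to be justified for $f$ with $\nabla f\in H^1$ only, which can be done via the same cutoff approximation. It yields the bound on the exterior part.

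Adding the two pieces gives \eqref{Estimate:Poisson-1}.
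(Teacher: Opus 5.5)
Your proposal is correct, but it takes a genuinely different route from the paper. The paper works through the conformal map $\mathcal T$ of Lemma~\ref{lm:biholomorphism}:
\begin{itemize}
\item it builds $\psi$ explicitly as $\int_{\mathcal F}G_{\mathcal F}(x,y)\omega(y)\dif y$;
\item it proves uniqueness by applying the Hieber et al.\ characterization (Lemma~\ref{lm:harmoinc}) to $\nabla^\perp(\psi_1-\psi_2)$;
\item it bounds $\|\nabla\psi\|_0$ by splitting $\mathcal F$ into $\mathcal C_{\widetilde R}$ and its complement, using pointwise bounds on the kernels $\mathcal K$ and $\mathcal K^*$;
\item it bounds $D^2\psi$ by duality: it writes $\langle\partial_{ij}\psi,\varphi\rangle=J_1+\dots+J_4$ and applies Calder\'on--Zygmund after the changes of variables $\eta=\mathcal T(x)$ and $\xi=\mathcal T(y)$ or $\xi=(\mathcal T(y))^*$.
\end{itemize}

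You instead proceed as follows.
\begin{itemize}
\item You get existence and the gradient bound from the Riesz theorem in $(\dot H^1_0(\mathcal F),\|\nabla\cdot\|_0)$, via the local Poincar\'e inequality on $\mathcal C_R$.
\item You replace Lemma~\ref{lm:harmoinc} by density of $C_0^\infty(\mathcal F)$ in that space. Your logarithmic cutoff combined with truncation does work: for $|\psi_M|\leqslant M$ one has $\|\psi_M\nabla\chi_N\|_0^2\leqslant 2\pi M^2/\ln N$.
\item You obtain second derivatives by localization: $H^2$ regularity for the Dirichlet problem on $\mathcal C_{3R}$, plus the whole-plane identity $\|\nabla^2 f\|_{L^2}=\|\Delta f\|_{L^2}$. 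This is essentially the strategy the paper only uses later, in Proposition~\ref{Proposition:Poisson-2}.
\end{itemize}
The whole-plane identity needs no cutoff approximation. Fourier-transform $w=\nabla f\in L^2(\mathbb R^2)$: $\curl w=0$ forces $\hat w(\xi)\parallel\xi$, so $\sum_{i,j}|\xi_i\hat w_j|^2=|\xi\cdot\hat w|^2$. This gives $\nabla w\in L^2$ with $\|\nabla w\|_{L^2}=\|\div w\|_{L^2}$ directly.

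\textbf{What each route buys.} Your argument is more elementary and self-contained: it uses neither the conformal map nor singular-integral theory. It also sidesteps two points the paper takes for granted. The first is that the Green's-function formula actually lies in $\dot H^1_0(\mathcal F)$ with zero trace. The second is that $\nabla^\perp(\psi_1-\psi_2)\in L^2_\sigma(\mathcal F)$; your density lemma is precisely what supplies this. The paper's heavier route pays off later: the explicit representation and the $J_1$--$J_4$ decomposition are reused verbatim in Proposition~\ref{Pr:infty-exterior}. There, Lemma~\ref{Lm:singular-integral-infty} turns them into the logarithmic $L^\infty$ bound on $D^2\psi$ that underlies the Beale--Kato--Majda estimate, which a purely variational proof does not provide.
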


\begin{proof}
We first prove uniqueness. Assume that $\psi_1, \psi_2 \in \dot{H}^1_0(\mathcal{F})$ are two solutions of \eqref{Equation:Poisson}. Define $v := \nabla^\perp(\psi_1 - \psi_2)$. Then it can be checked that
\begin{equation}
	\operatorname{div} v = 0,\quad \operatorname{curl} v = 0,\quad v|_{\partial \mathcal{S}} \cdot n = 0,
\end{equation}
where $n$ denotes the unit normal vector on $\partial \mathcal{S}$. It follows from Lemma \ref{lm:harmoinc} that $v = 0$ in $\mathcal{F}$, which combining with the fact $\psi_1 = \psi_2 = 0$ on $\partial \mathcal{S}$ yields that $\psi_1 = \psi_2$ in $\mathcal{F}$.

Using Green's function $G_\mathcal{F}$, we can obtain an explicit formula  for $\psi$:
\begin{align}\label{Def:psi}
	\psi(x) = \int_\mathcal{F}  G_\mathcal{F}(x, y) \omega(y) \dif y.
\end{align}
It can be verified directly that  $\psi$ satisfies \eqref{Equation:Poisson}. Now, we divide the proof of bound \eqref{Estimate:Poisson-1} into two parts. 

\paragraph{\textbf{Part 1. Estimating $\nabla \psi$}}
Notice that, for $i=1,2$, $\partial_i\psi$ can be written as 	
	\begin{equation}\label{Equ:partial_psi}
		\partial_i \psi = \int_\mathcal{F} \big(\mathcal{K}(x,y) - \mathcal{K}^*(x,y)\big) \cdot \partial_i \mathcal{T}(x) \omega(y) \dif y,
	\end{equation}
	where
	\begin{equation}\label{Def:K_T}
		\mathcal{K}(x, y) := \frac{1}{2\pi}\frac{\mathcal{T}(x) - \mathcal{T}(y)}{|\mathcal{T}(x) - \mathcal{T}(y)|^2}, \quad \mathcal{K}^*(x, y) := \frac{1}{2\pi}\frac{\mathcal{T}(x) - (\mathcal{T}(y))^*}{|\mathcal{T}(x) - (\mathcal{T}(y))^*|^2}.
	\end{equation}
	
	Observing that $\mathcal{K}$ exhibits opposite behavior near the boundary and at infinity, it is necessary to treat these two regions separately.  More precisely, define $\widetilde{R}$ by
	\begin{align}
		\widetilde{R} := 2\Big(1 + \|\nabla\mathcal{T}\|_{L^\infty} + \|\nabla\mathcal{T}^{-1}\|_{L^\infty}\Big) R,
	\end{align}
	so that the integral of $|\partial_i\psi|^2$ over $\mathcal{F}$ satisfies:
	\begin{equation}\label{Est:partial_i_psi}
		\begin{split}
			&\int_{\mathcal{F}} |\partial_i\psi(x)|^2\dif x \leqslant 2\int_{\mathcal{C}_{\widetilde{R}}} \bigg| \int_{\mathcal{C}_R} \mathcal{K}(x,y)\cdot \partial_i \mathcal{T}(x) \omega(y) \dif y \bigg|^2 \dif x\\
			&\quad+ 2\int_{\mathcal{C}_{\widetilde{R}}} \bigg| \int_{\mathcal{C}_R} \mathcal{K}^*(x,y)\cdot \partial_i \mathcal{T}(x) \omega(y) \dif y \bigg|^2 \dif x \\
			&\quad +\int_{\mathcal{F}\setminus\mathcal{C}_{\widetilde{R}}} \bigg| \int_{\mathcal{C}_R} \big(\mathcal{K}(x,y) - \mathcal{K}^*(x,y)\big)\cdot \partial_i \mathcal{T}(x) \omega(y) \dif y\bigg|^2\dif x\\
			&=: I_1 + I_2 + I_3,
		\end{split}
	\end{equation}
	where we have used the assumption $\omega$ being supported in $\overline{\mathcal{C}_R}$.

In order to estimate $I_1$, we  first apply Lemma \ref{lm:biholomorphism} to deduce that
	\begin{equation}
		|\mathcal{K}(x,y)\cdot \partial_i \mathcal{T}(x)|  \leqslant K|x-y|^{-1}, \text{ for } x \in \mathcal{C}_{\widetilde{R}} \text{ and } y \in \mathcal C_{R},
	\end{equation}
	which combining with Young's convolution inequality yields that
	\begin{align}\label{Est:I-1}
		I_1 \leqslant K_{R}\|\omega\|_{0}^2,
	\end{align}
	where $K_{R}$ is a constant that depends on $R$. 
	
	The second term  $I_2$ can be handled similarly. Indeed, by using H\"older's inequality, we  obtain
	\begin{equation*}
		\begin{split}
		\bigg| \int_\mathcal{F} \mathcal{K}^*(x,y)\cdot \partial_i \mathcal{T}(x) \omega(y) \dif y \bigg|^2 &\leqslant K\int_{\mathcal{C}_R} \big|\mathcal{K}^*(x,y)\big| \,\big|\omega(y)\big|^2\dif y \int_{\mathcal{C}_R} \big|\mathcal{K}^*(x,y)\big|\dif y,
		\end{split}
	\end{equation*}
	which implies
	\begin{equation*}
		\begin{split}
			I_2 \leqslant K \|\omega\|_0^2 \times \sup_{y \in \mathcal C_R}\int_{\mathcal C_{\widetilde{R}}}\big|\mathcal{K}^*(x, y)\big| \dif x \times \sup_{x \in \mathcal C_{\widetilde{R}}}\int_{\mathcal C_R}\big|\mathcal{K}^*(x, y)\big| \dif y.
		\end{split}
	\end{equation*}	
	We can arrive at
	\begin{equation}
	I_2 \leqslant K_R\|\omega\|_0^2,
	\end{equation}
	once we prove
	\begin{equation}\label{Est:k}
		\begin{split}
			\sup_{y \in \mathcal C_R}\int_{\mathcal C_{\widetilde{R}}}\big|\mathcal{K}^*(x, y)\big| \dif x \leqslant K_R \text { and } \sup_{x \in \mathcal C_{\widetilde{R}}}\int_{\mathcal C_R}\big|\mathcal{K}^*(x, y)\big| \dif y \leqslant K_R.
		\end{split}
	\end{equation}
	Indeed, by making a change of variable
	\begin{equation*}
		\eta = \mathcal{T}(x) - (\mathcal{T}(y))^*,
	\end{equation*}
	we find that
	\[
		\mathcal{K}^*(x,y) \leqslant |\eta|^{-1} \text{ and } |\eta| \leqslant K({R} + 1) \text{ for } x \in \mathcal{C}_{\widetilde{R}} , y \in \mathcal C_R,
	\]
	and
	\[
\dif x = \det\Big(\nabla \mathcal T^{-1}\big(\mathcal T(x)\big)\Big)\dif \eta \text{ and } \dif y = \det\Big(\nabla \mathcal T^{-1}\big(\mathcal T(y)\big)\Big) |y|^4 \dif \eta,
\]	
	which yields \eqref{Est:k} from Lemma \ref{lm:biholomorphism}.
	
	To estimate $I_3$, we use the fact that for $a, b\in \mathbb{R}^2$ with $a \neq 0$ and $b \neq 0$, 
	\begin{equation*}
		\bigg|\frac{a}{|a|^2} - \frac{b}{|b|^2}\bigg| = \frac{|a-b|}{|a||b|},
	\end{equation*}
	from which it follows that for $x \in \mathcal{F}\setminus\mathcal{C}_{\widetilde{R}}$ and $y \in \mathcal{C}_R$,
	\[
		\big|\mathcal{K}(x,y) - \mathcal{K}^*(x,y)\big| = \frac{1}{2\pi}\,\frac{|\mathcal{T}(y) - (\mathcal{T}(y))^*|}{|\mathcal{T}(x)-\mathcal{T}(y)|\ |\mathcal{T}(x)-(\mathcal{T}(y))^*|} \leqslant KR^{-1}.
	\]
	Therefore,
	\begin{align}\label{Est:I-3}
		I_3 \leqslant K_{R}\|\omega\|_{0}^2.
	\end{align}
	
	With the estimates of $I_i$, i=1,2,3, it follows immediately
	\begin{equation}
		\|\partial_i\psi\|_0 \leqslant K_R \|\omega\|_{0}.
	\end{equation}
	
	\paragraph{\textbf{Part 2. Estimating $D^2 \psi$}} For $i, j=1,2$, the weak derivative $\partial_{ij}\psi$ is defined by
	\begin{equation}\label{Def:weak}
		\left<\partial_{ij}\psi,\varphi\right> := -\int_\mathcal{F} \partial_i\psi(x)\partial_j\varphi(x)\dif x, \ \forall \varphi \in C_0^\infty(\mathcal{F}).
	\end{equation}
	After integrating by parts, we find that
	\begin{equation}\label{Est:2-order-psi}
		\big|\left<\partial_{ij}\psi,\varphi\right>\big| = {J}_1 + {J}_2 + {J}_3+J_4,
	\end{equation}
	with 
	\begin{equation}\label{Def:j}\left\{
		\begin{aligned}
			&J_1 = -\int_{\mathcal{F}}\Big[\lim_{\eps \rightarrow 0}\int_{|x-y|=\eps} \mathcal{K}(x, y) \cdot \partial_i\mathcal{T}(x) \varphi(x) \frac{x_j}{|x|}\dif \sigma(x) \Big]\omega(y)\dif y,\\
			&J_2 = \int_{\mathcal{F}}\Big[\int_{\mathcal{F}} \partial_{x_j}\mathcal{K}(x, y) \cdot \partial_i\mathcal{T}(x) \varphi(x)\dif x \Big]\omega(y)\dif y,\\
			&J_3 = -\int_{\mathcal{F}}\Big[\int_{\mathcal{F}} \partial_{x_j}\mathcal{K}^*(x, y) \cdot \partial_i\mathcal{T}(x) \varphi(x)\dif x \Big]\omega(y)\dif y,\\
			&J_4 = \int_{\mathcal{F}}\int_{\mathcal{F}} \big(\mathcal{K}(x,y)-\mathcal{K}^*(x, y)\big) \cdot \partial_i\partial_j\mathcal{T}(x) \varphi(x)\omega(y)\dif x \dif y.
		\end{aligned}\right.
	\end{equation}
	
	The first term $J_1$ is easy to estimate. Indeed, Lemma \ref{lm:biholomorphism} implies
	\begin{equation}\label{Est:tmp-1}
		\big|\mathcal{K}(x,y)\big| \leqslant \frac{1}{2\pi}\big|\mathcal{T}(x)-\mathcal{T}(y)\big|^{-1}\leqslant K|x-y|^{-1},
	\end{equation}
	therefore, it follows from H\"older's inequality that
	\begin{equation}
		|J_1| \leqslant K\|\omega\|_{0}\|\varphi\|_{0}.
	\end{equation}
	
	To estimate $J_2$, we need to make the change of variables indicated by $\eta = \mathcal{T}(x), \xi =\mathcal{T}(y)$, then $\partial_{x_j}\mathcal{K}(x,y) \cdot \partial_i \mathcal T(x)$ can be rewritten by
	\begin{equation}
		\begin{split}
			\partial_{x_j}\mathcal{K}(x, y) \cdot \partial_i \mathcal T(x) = {\overline{\mathcal K}_{kl}}(\eta,\xi)   \partial_i \mathcal T_l\big(\mathcal T^{-1}(\eta)\big) \partial_j\mathcal{T}_k\big(\mathcal{T}^{-1}(\eta)\big),
		\end{split}
	\end{equation}
	where $\overline{\mathcal K}_{kl}$ is defined by \eqref{Def:p-k}. Therefore,
	\begin{equation}\label{Def:j_2}
		\begin{split}
			& J_2 = \int_{\mathbb{R}^2\setminus\mathbb{D}}\Big[\int_{\mathbb{R}^2\setminus\mathbb{D}} {\overline{\mathcal K}_{kl}}(\eta, \xi)\widetilde{\varphi}_{ijkl}(\eta) \dif \eta \Big] \widetilde{\omega}(\xi)\dif \xi .
		\end{split}
	\end{equation}
	with
	\begin{equation}\label{Def:tilde_w}
		\begin{split}
			\widetilde{\varphi}_{ijkl}(\eta) &= \partial_i\mathcal{T}_l\big(\mathcal{T}^{-1}(\eta)\big)\partial_j\mathcal{T}_k\big(\mathcal{T}^{-1}(\eta)\big) \det\big(\nabla \mathcal{T}^{-1}(\eta)\big)\varphi\big(\mathcal{T}^{-1}(\eta)\big),\\
			\widetilde{\omega}(\xi) &= \omega\big(\mathcal{T}^{-1}(\xi)\big)\det\big(\nabla\mathcal{T}^{-1}(\xi)\big).
		\end{split}
	\end{equation}
	Observing that ${\overline{\mathcal{K}}_{kl}}(\eta,\xi)$ is singular, we may deduce from Lemma \ref{lm:biholomorphism}, the Calder\'on-Zygmund theorem, and H\"older's inequality that
	\begin{equation}
		|J_2| \leqslant K\|\omega\|_{0}\|\varphi\|_{0}.
	\end{equation}
	
	$J_3$ can be estimated analogously. Indeed, let $\eta = \mathcal T(x)$ and $\xi=(\mathcal T(y))^*$, we get
	\begin{equation}
		\begin{split}
			& J_3 = -\int_{\mathbb{R}^2\setminus\mathbb{D}}\Big[\int_{\mathbb{R}^2\setminus\mathbb{D}} {\overline{\mathcal{K}}_{kl}}(\eta, \xi) \widetilde{\varphi}_{ijkl}(\eta)\dif \eta \Big]\widetilde{\omega}^*(\xi)\dif \xi,
		\end{split}
	\end{equation}
	with
	\[\widetilde{\omega}^* (\xi) = \omega\big(\mathcal{T}^{-1}(\xi^*)\big)\det\big(\nabla\mathcal{T}^{-1}(\xi^*)\big)|\xi|^{-4}.
	\]
Lemma \ref{lm:biholomorphism}, the Calder\'on-Zygmund theorem, and H\"older's inequality imply that
	\begin{equation}
		|J_3| \leqslant K\|\varphi\|_{0} \|\widetilde{\omega}^*\|_0.
	\end{equation}
	Furthermore, by using the fact that $\omega$ vanishes outside $\mathcal{C}_R$, we can obtain from Lemma \ref{lm:biholomorphism} that
	\[
	\|\widetilde{\omega}^*\|_0 \leqslant K\|\omega\|_0,
	\]
	which immediately yields
	\begin{equation}
		|J_3| \leqslant K_R\|\varphi\|_{0}\|\omega\|_{0}.
	\end{equation}
	
	To complete the estimate of $\partial_{i}\partial_j\psi$, it remains to address the last term $J_4$. Let us use Fubini's theorem to rewrite this term as
	\begin{equation}
		J_4 = \int_{\mathcal{F}}\Big[\int_{\mathcal{F}}\big(\mathcal{K}(x,y)-\mathcal{K}^*(x,y)\big)\omega(y)\dif y\Big]\cdot \partial_i\partial_j\mathcal{T}(x)\varphi(x)\dif x.
	\end{equation}
	
	Since  $D^2\mathcal{T}$ is bounded, we may follow strictly analogous calculations for $\|\nabla \psi\|_0$ to obtain
	\begin{equation}
		|J_4| \leqslant K_R\|\omega\|_{0}\|\varphi\|_{0}.
	\end{equation}

	Finally, by substituting the estimates of $J_{i}$, i=1...4, into \eqref{Est:2-order-psi}, it follows
	\begin{align}
		\big|\left<\partial_i\partial_j\psi, \varphi\right>\big| \leqslant K_{R}\|\varphi\|_{0}\|\omega\|_{0}.
	\end{align}
	Note that $\varphi \in C_0^\infty(\mathcal{F})$ is arbitrary, we conclude that the inequality \eqref{Estimate:Poisson-1} holds, and this completes the proof of Proposition \ref{Proposition:Poisson}.
\end{proof}

When $\omega$ possesses high regularity, we can derive further estimates for $\psi$, which are given as follows.

\begin{proposition}[Regularity]\label{Proposition:Poisson-2} Let $s\geqslant 0$ be an integer. Suppose that $\omega$ belongs to $H^s(\mathcal{F})$ and is compactly supported. Then we have 
	\begin{align}\label{Estimate:Poisson-2}
		\|D^{s+2}\psi\|_{0} \leqslant K(\|\omega\|_{s} + \|\nabla \psi\|_{0}),
	\end{align}
	where the constant $K$ is independent of the support of $\omega$.
\end{proposition}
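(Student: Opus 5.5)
The plan is to split $\mathcal F$ into a bounded neighbourhood of the boundary and an exterior region, and to use standard elliptic regularity in each piece, with constants depending only on $\mathcal S$ and the fixed cutoff. Since $\mathcal S \subset B(0,1)$, fix a smooth cutoff $\chi$ with $\chi \equiv 1$ on $B(0,2)$ and $\operatorname{supp}\chi \subset B(0,3)$. Write $\psi = \chi\psi + (1-\chi)\psi$. The function $\psi$ is the solution given by Proposition \ref{Proposition:Poisson}, so $\nabla\psi \in H^1$. We will argue by induction on $s$, after first establishing the case $s=0$, namely $\|D^2\psi\|_0 \leqslant K(\|\omega\|_0 + \|\nabla\psi\|_0)$ with $K$ independent of the support radius.

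\textbf{Exterior piece.} The function $w = (1-\chi)\psi$ extends by zero to all of $\mathbb R^2$. It satisfies
\[
\Delta w = (1-\chi)\omega - 2\nabla\chi\cdot\nabla\psi - \psi\Delta\chi .
\]
On $\mathbb R^2$ we have the Plancherel identity $\|D^{k+2} w\|_{L^2(\mathbb R^2)} = \|D^k\Delta w\|_{L^2(\mathbb R^2)}$, whose constant is independent of everything. The right-hand side contains $\omega$, $\nabla\psi$ and $\psi$ restricted to the compact annulus $\mathcal C_{2,3}$. The term involving $\psi$ itself (not its gradient) must be controlled by $\nabla\psi$. For this we use a Poincaré inequality on the bounded set $\mathcal C_3$, which is valid because $\psi = 0$ on $\partial\mathcal S$: it gives $\|\psi\|_{L^2(\mathcal C_3)} \leqslant K\|\nabla\psi\|_{L^2(\mathcal C_3)}$. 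Higher derivatives of $\psi$ on $\mathcal C_{2,3}$ are of lower order than $D^{s+2}$ and will be absorbed inductively. Note that $\psi$ is not globally in $L^2$, since it grows logarithmically; this is exactly why we only ever use $\psi$ on the bounded set $\mathcal C_3$.

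\textbf{Interior piece.} The function $v = \chi\psi$ solves a Dirichlet problem on the bounded smooth domain $\mathcal C_3$:
\[
\Delta v = \chi\omega + 2\nabla\chi\cdot\nabla\psi + \psi\Delta\chi, \qquad v = 0 \text{ on } \partial\mathcal S \text{ and on } |x| = 3 .
\]
Classical $H^{k+2}$ regularity for the Dirichlet Laplacian on this fixed smooth bounded domain gives
\[
\|v\|_{H^{s+2}(\mathcal C_3)} \leqslant K\big(\|\chi\omega\|_s + \|\nabla\chi\cdot\nabla\psi\|_{H^s(\mathcal C_3)} + \|\psi\|_{H^s(\mathcal C_3)}\big).
\]
The constant depends only on $\mathcal S$ and $\chi$, not on the support of $\omega$.

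\textbf{Induction and closing.} For $s=0$, both right-hand sides involve only $\omega$, $\nabla\psi$, and $\psi$ on $\mathcal C_3$. The last of these is controlled by Poincaré, which gives the base case. For $s \geqslant 1$, the commutator terms $\nabla\chi\cdot\nabla\psi$ and $\psi\Delta\chi$ in $H^s(\mathcal C_3)$ involve at most $s+1$ derivatives of $\psi$ on a bounded region. By the induction hypothesis applied with $s-1$, these are bounded by $K(\|\omega\|_{s-1} + \|\nabla\psi\|_0)$. Summing the two pieces then yields \eqref{Estimate:Poisson-2}.

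\textbf{Main obstacle.} The main step requiring care is justifying that $D^{s+2}\psi \in L^2$ a priori, so that the manipulations are legitimate. I would handle this with difference quotients in the exterior, or simply by noting that $\omega$ has compact support. Then $\psi$ is harmonic outside a large ball and, by the Green's function formula, smooth there with decaying derivatives, while interior and boundary regularity are standard. The second point of care is ensuring that no constant depends on $R$. This holds because all localisation is done on the fixed annulus $\mathcal C_3$, and the whole-space step has constant $1$.
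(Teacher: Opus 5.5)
Your proof is correct and follows essentially the same route as the paper. Both arguments split $\psi$ with a cutoff, treat the exterior piece by the whole-space Fourier/Plancherel estimate and the piece near $\partial\mathcal S$ by classical elliptic regularity on a fixed bounded annulus, use Poincar\'e there to control $\psi$ by $\nabla\psi$, and induct on $s$ to absorb the lower-order commutator terms.

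The only difference is cosmetic. Your partition $\chi\psi+(1-\chi)\psi$ gives a homogeneous Dirichlet problem for $\chi\psi$ on $\mathcal C_3$, whereas the paper solves for $\psi$ itself on $\mathcal C_{R_3}$ with boundary data $g=\widetilde\varphi\psi$ and bounds $g$ through the exterior estimate. As a side remark, $\psi$ actually tends to a constant at infinity rather than growing logarithmically, which does not affect your argument.
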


\begin{proof}
	We split the proof into three steps. Choose radii $R_1<R_2<R_3$ such that $\mathcal{S} \subset\subset B(0,R_1)$. We first consider the Poisson equation in the whole plane to obtain a uniform bound of $D^{s+2}\psi$ on $\mathcal F \setminus\mathcal{C}_{R_2}$. Next, we study the Poisson equation on $\mathcal{C}_{R_3}$ to establish a further bound on $\mathcal{C}_{R_3}$. Finally, we collect these estimates together to derive the inequality \eqref{Estimate:Poisson-2}.
	
	\paragraph{\textbf{Step 1. Estimates on $\mathbb{R}^2$}} Let $\varphi \in C^\infty(\mathbb{R}^2)$ be a smooth function which is non-negative, vanishes on $B(0,R_1)$, and equals $1$ outside $B(0,R_2)$. Then the function $\widetilde{\psi}:=\varphi\psi$ satisfies the Poisson equation
	\begin{equation}
		\Delta \widetilde{\psi}(x) = \widetilde{\omega}(x), \ \forall x \in \mathbb{R}^2,
	\end{equation}
	with
	\[\widetilde{\omega} :=\varphi \omega + 2 \nabla \varphi \cdot \nabla \psi + \Delta \varphi \psi.
	\]
	It follows that
	\begin{equation}\label{Estimate:ds-phi-whole-1}
		\begin{split}
			\|D^{s+2} \widetilde{\psi}\|_{L^2(\mathbb{R}^2)} &\leqslant  K \|D^s\widetilde{\omega}\|_{L^2(\mathbb{R}^2)}.
		\end{split}
	\end{equation}
	Furthermore, since $\psi \equiv 0$ on $\partial \mathcal{S}$ and $\mathcal{C}_{R_2}$ is a bounded domain, Poincar\'e's inequality yields that
	\begin{equation}\label{Estimate:ds-phi-whole-t}
		\|\psi\|_{L^2(\mathcal{C}_{R_2})} \leqslant K\|\nabla \psi\|_{L^2(\mathcal{C}_{R_2})}.
	\end{equation}
	Therefore,
	\begin{equation}\label{Estimate:ds-phi-whole-3}
		\|D^{s+2}\psi\|_{L^2(\mathcal F\setminus\mathcal{C}_{R_2})}\leqslant K \big(\|\omega\|_{s} +\|\nabla \psi\|_{s}\big).
	\end{equation}

	\paragraph{\textbf{Step 2. Estimates on $\mathcal{C}_{R_3}$}}  Let $\widetilde \varphi \in C^\infty(\mathbb{R}^2)$ be a smooth function which is non-negative, vanishes on $B(0,R_2)$, and equals $1$ outside $B(0,R_3)$. Denote $g := \widetilde\varphi\psi$ and consider the following Poisson equation:
	\begin{align}
		\begin{cases}
			\Delta \psi = \omega \hbox{ in } \mathcal{C}_{R_3},\\
			\psi = g \hbox{ on } \partial\mathcal{C}_{R_3}.
		\end{cases}
	\end{align}
	We infer from Theorem 8.13 in \cite{gilbarg2001elliptic} that
	\begin{align}\label{Est:bounded-tmp-1}
		\|D^{s+2}\psi\|_{L^2(\mathcal{C}_{R_3})} \leqslant K\big(\|\omega\|_{H^s(\mathcal{C}_{R_3})} + \|\psi\|_{L^2(\mathcal{C}_{R_3})}  +\|g\|_{H^{s + 2}(\mathcal{C}_{R_3})}\big).
	\end{align} 
	
	Furthermore, using the fact $\psi$ and $\varphi$ vanishes on $\partial \mathcal{S}$ and $\mathcal C_{R_2}$, respectively, we may apply the Poincar\'e inequality and the Gagliardo–Nirenberg interpolation inequality to obtain
	\begin{equation}\label{Est:bounded-tmp-3}
		\|\psi\|_{L^2(\mathcal{C}_{R_3})}+ \|g\|_{H^{s+2}(\mathcal{C}_{R_3})}  \leqslant K\Big(\|D^{s+2}\psi\|_{L^2(\mathcal F\setminus \mathcal{C}_{R_2})}+\|\nabla\psi\|_{L^2(\mathcal{C}_{R_3})} \Big),
	\end{equation}
	from which we conclude that
	\begin{equation}\label{Estimate:ds-phi-whole-5}
		\begin{split}
			&\|D^{s+2}\psi\|_{L^2(\mathcal{C}_{R_3})} \leqslant K\Big(\|\omega\|_{s} +  \|D^{s+2}\psi\|_{L^2(\mathcal F \setminus \mathcal{C}_{R_2})}+  \|\nabla \psi\|_0\Big).
		\end{split}
	\end{equation}
	
	\paragraph{\textbf{Step 3. Merge the estimates}}
	Collecting \eqref{Estimate:ds-phi-whole-3} and \eqref{Estimate:ds-phi-whole-5}, we obtain immediately that
	\begin{align}
		\|D^{s+2}\psi\|_{0}	\leqslant K(\|\omega\|_{s} + \|\nabla \psi\|_{s}),
	\end{align}
	which yields \eqref{Estimate:Poisson-2} by an induction argument, and this ends the proof of Proposition \ref{Proposition:Poisson-2}.
\end{proof}

The next proposition provides an $L^\infty$ bound for $D^2 \psi$.
\begin{proposition}\label{Pr:infty-exterior}Suppose that $\omega \in H^2(\mathcal{F})$ is supported in $\overline{\mathcal C_{R}}$ with some $R >0$. Then, we have
	\begin{equation}\label{Est:nabla-compact}
		\|D^2\psi\|_{L^\infty} \leqslant K_R\big(1+\max(1, \ln \|\omega\|_2)\|\omega\|_{L^\infty}\big),
	\end{equation}
	where $K_R$ is a constant depends on $R$.
\end{proposition}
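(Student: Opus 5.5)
We will prove the bound pointwise, starting from the second‑derivative representation already obtained in Part 2 of the proof of Proposition \ref{Proposition:Poisson}. Differentiating \eqref{Equ:partial_psi} once more in $x_j$ gives, for $x\in\mathcal F$,
\begin{equation*}
\partial_{ij}\psi(x) = \big(\text{p.v.}\!\!\int \partial_{x_j}\mathcal K(x,y)\,\omega(y)\dif y\big)\cdot\partial_i\mathcal T(x) + c_{ij}(x)\,\omega(x) - \int \partial_{x_j}\mathcal K^*(x,y)\,\omega(y)\dif y\cdot \partial_i\mathcal T(x) + \int \big(\mathcal K-\mathcal K^*\big)(x,y)\,\omega(y)\dif y\cdot\partial_{ij}\mathcal T(x).
\end{equation*}
Here $c_{ij}$ is a bounded coefficient coming from the limiting circle integral, as in $J_1$. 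These are the pointwise analogues of $J_1,\dots,J_4$, and we estimate each in $L^\infty$.

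\textbf{The local term and the regular terms.} The term $c_{ij}\omega$ is bounded by $K\|\omega\|_{L^\infty}$. For the last term, $D^2\mathcal T$ is bounded by Lemma \ref{lm:biholomorphism}. We split $x\in\mathcal C_{\widetilde R}$ from $x\notin\mathcal C_{\widetilde R}$ exactly as in $I_1,I_2,I_3$:
\begin{itemize}
\item for $\mathcal K$ with $x\in\mathcal C_{\widetilde R}$, use $|\mathcal K|\le K|x-y|^{-1}$, which is integrable over $\mathcal C_R$;
\item for $\mathcal K^*$ with $x\in\mathcal C_{\widetilde R}$, use \eqref{Est:k};
\item for $x$ far away, use $|\mathcal K-\mathcal K^*|\le KR^{-1}$.
\end{itemize}
This gives a bound $K_R\|\omega\|_{L^\infty}$.

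\textbf{The reflected term.} After the substitution $\eta=\mathcal T(x)$, $\xi=(\mathcal T(y))^*$, the kernel becomes $\overline{\mathcal K}_{kl}(\eta,\xi)$ acting on $\widetilde\omega^*$. Since $\mathcal T(y)\in\mathbb R^2\setminus\overline{\mathbb D}$, the point $\xi$ lies in $\mathbb D$ while $\eta\notin\mathbb D$. The kernel is therefore nonsingular except where both points approach $\partial\mathbb D$, that is, for $x,y$ near $\partial\mathcal S$. We treat it with Lemma \ref{Lm:singular-integral-infty} applied to $P_{kl}\widetilde\omega^*$, with $\widetilde\omega^*$ extended by zero. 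This $\widetilde\omega^*$ is supported in the bounded set $\mathbb D$ and satisfies $\|\widetilde\omega^*\|_{L^\infty}\le K_R\|\omega\|_{L^\infty}$. It is not continuous across $\partial \mathbb D$, so we split it as $\widetilde\omega^* \chi + \widetilde\omega^*(1-\chi)$ with a cutoff $\chi$ near $\partial\mathbb D$. The interior part is handled by the kernel bound $|\eta-\xi|^{-2}\le K_R$. For the boundary part we keep the principal-value form: since the evaluation point $\eta$ lies outside the support, the p.v. coincides with the ordinary integral, and Lemma \ref{Lm:singular-integral-infty} still applies if we first extend $\widetilde\omega^*$ by even reflection across $\partial\mathbb D$, which preserves its $C^\gamma$ norm.

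\textbf{The main singular term.} This is the expected main obstacle. In the variables $\eta=\mathcal T(x)$, $\xi=\mathcal T(y)$ it equals
\begin{equation*}
\partial_i\mathcal T_l\,\partial_j\mathcal T_k\big(\mathcal T^{-1}(\eta)\big)\;P_{kl}\widetilde\omega(\eta),
\end{equation*}
where $\widetilde\omega$ is supported in a bounded set $\mathcal A=\mathcal T(\mathcal C_R)$ of measure at most $K_R$. To use Lemma \ref{Lm:singular-integral-infty} we need $\widetilde\omega$ to be H\"older continuous on the whole plane. We therefore extend $\omega$ to $\mathbb R^2$ by a bounded extension operator $H^2(\mathcal F)\to H^2(\mathbb R^2)$. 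Concretely, we extend $\widetilde\omega$ across $\partial\mathbb D$ by the reflection $\xi\mapsto\xi^*$ together with a cutoff, so that $\|E\widetilde\omega\|_{L^\infty}\le K\|\omega\|_{L^\infty}$ and $\|E\widetilde\omega\|_{C^{1/2}}\le K\|\omega\|_{2}$. The latter uses Sobolev embedding $H^2\subset C^{1/2}$ in two dimensions.

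The extra integral over the reflected part $\mathbb D$ must then be subtracted. It is a nonsingular-kernel term because $\eta\notin\mathbb D$, except near the circle, and there it is of exactly the same type as the reflected term above. We handle it identically; this is the step where care with the boundary singularity is essential.

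\textbf{Conclusion.} Lemma \ref{Lm:singular-integral-infty} with $\gamma=1/2$ yields
\begin{equation*}
\|P_{kl}E\widetilde\omega\|_{L^\infty}\le K\big(\|\omega\|_2\,\eps^{1/2}+\max(1,\ln(K_R/\eps))\|\omega\|_{L^\infty}\big).
\end{equation*}
We choose $\eps=\min(1,\|\omega\|_2^{-2})$, which gives $K_R\big(1+\max(1,\ln\|\omega\|_2)\|\omega\|_{L^\infty}\big)$. Collecting all terms gives \eqref{Est:nabla-compact}.
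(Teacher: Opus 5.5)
Your pointwise version of the decomposition $J_1,\dots,J_4$ follows the paper, as do your bounds for the local term and for $J_4$ and the final choice $\gamma=\tfrac12$, $\eps=\min(1,\|\omega\|_2^{-2})$. You also correctly noticed a point the paper's proof passes over. Since $\omega$ need not vanish on $\partial\mathcal S$, the zero extensions of $\widetilde\omega$ and $\widetilde\omega^*$ jump across $\partial\mathbb D$. Lemma~\ref{Lm:singular-integral-infty}, whose proof uses the H\"older modulus on whole small balls around the evaluation point, therefore does not apply to them as they stand; the paper simply inserts $\|\widetilde\omega\|_{C^\gamma(\mathcal A_R)}$.

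However, your repair is circular. For $|\eta|>1$ put
\[
A(g)(\eta)=\mathrm{p.v.}\!\int_{|\xi|>1}\overline{\mathcal K}_{kl}(\eta,\xi)\,g(\xi)\dif\xi,\qquad B(f)(\eta)=\int_{|\xi|<1}\overline{\mathcal K}_{kl}(\eta,\xi)\,f(\xi)\dif\xi,
\]
and $g^\dagger(\xi):=g(\xi^*)$. Let $Eg$ be $g$ outside the disk and $g^\dagger$ inside. Then $P_{kl}(Eg)=A(g)+B(g^\dagger)$ on $|\eta|>1$, and Lemma~\ref{Lm:singular-integral-infty} controls only this sum.

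For $J_2$ you write $A(\widetilde\omega)=P_{kl}(E\widetilde\omega)-B(\widetilde\omega^\dagger)$ and send the $B$-term to your $J_3$ argument. There you bound a $B$-term by applying the lemma to the even reflection of its density, i.e.\ you bound $P_{kl}(Ef)=B(f)+A(f^\dagger)$, which is not $B(f)$. The discrepancy is again an $A$-term with a jump at the circle, evaluated on the side of its support. In fact the even reflection of $\widetilde\omega^\dagger$ is $E\widetilde\omega$ itself, up to cutoffs away from the circle. The two reflections cancel, and the estimate of $A(\widetilde\omega)$ is reduced to itself. The boundary contribution, which is the only delicate part, is never estimated.

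What is missing is one direct estimate of a one-sided integral near $\partial\mathbb D$. For $|\eta|>1$ the mean value property gives $\frac1{2\pi}\int_{\mathbb D}\ln|\eta-\xi|\dif\xi=\frac12\ln|\eta|$, hence $\bigl|\int_{\mathbb D}\overline{\mathcal K}_{kl}(\eta,\xi)\dif\xi\bigr|=\bigl|\tfrac12\partial_k\partial_l\ln|\eta|\bigr|\leqslant 1$. Write $f(\xi)=f(\eta_0)+\bigl(f(\xi)-f(\eta_0)\bigr)$ with $\eta_0=\eta/|\eta|$, and use $|\xi-\eta_0|\leqslant 2|\xi-\eta|$ for $\xi\in\mathbb D$. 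You then get
\[
|B(f)(\eta)|\leqslant K\bigl(\|f\|_{C^\gamma(\overline{\mathbb D})}\eps^\gamma+\max\bigl(1,\ln(1/\eps)\bigr)\|f\|_{L^\infty}\bigr)
\]
uniformly in $|\eta|>1$. This handles $J_3$ directly, with no reflection and no cutoff $\chi$ needed. Then $A(\widetilde\omega)=P_{kl}(E\widetilde\omega)-B(\widetilde\omega^\dagger)$, with Lemma~\ref{Lm:singular-integral-infty} applied to the globally H\"older function $E\widetilde\omega$, handles $J_2$. With this ingredient added your argument goes through. Without it, or an equivalent statement that truncating the kernel to a smooth domain costs only a bounded amount, it does not prove the proposition.
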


\begin{proof}
	Let $\varphi \in C_0^\infty(\mathcal{F})$ be arbitrary, recall from \eqref{Est:2-order-psi} that
		\begin{equation}
		\big|\left<\partial_{ij}\psi,\varphi\right>\big| = \sum_{i=1}^4 J_i,
	\end{equation}
	where $J_i$ is defined in \eqref{Def:j}.

	By using \eqref{Est:tmp-1} and H\"older's inequality, we deduce that
	\begin{equation}
		|J_1| \leqslant K\|\omega\|_{L^\infty}\|\varphi\|_{L^1}.
	\end{equation}
	
	To estimate $J_2$, we first rewrite \eqref{Def:j_2} as
		\begin{equation}
		\begin{split}
			& J_2 = \int_{\mathbb{R}^2\setminus\mathbb{D}}\Big[\int_{\mathbb{R}^2\setminus\mathbb{D}} {\overline{\mathcal{K}}_{kl}}(\eta, \xi)\widetilde{\omega}(\xi) \dif \xi \Big] \widetilde{\varphi}_{ijkl}(\eta)\dif \eta,
		\end{split}
	\end{equation}
	where $\widetilde{\omega}$ and $\widetilde{\varphi}$ are given in \eqref{Def:tilde_w}. Noting that $\omega$ is supported in $\overline{\mathcal C_R}$, hence there exists a bounded set $\mathcal A_R$ in which $\widetilde{\omega}$ is supported.  Then, we apply Lemma \ref{Lm:singular-integral-infty} and H\"older's inequality to deduce that
	\begin{equation}
		|J_2| \leqslant K\Big(\|\widetilde{\omega}\|_{C^{\gamma}(\mathcal A_R)}\eps^\gamma + \max\big(1,\ln ({\sqrt{m_{\mathcal A_R}}/{\eps}})\big)\|\widetilde \omega\|_{L^\infty(\mathcal A_R)} \Big) \|\widetilde\varphi\|_{L^1(\mathcal{F})},
	\end{equation}
	and we infer from Lemma \ref{lm:biholomorphism} that
	\begin{equation}\left\{
		\begin{split}
			\|\widetilde{\omega}\|_{L^\infty(\mathcal A_R)} &\leqslant K\|\omega\|_{L^\infty(\mathcal{C}_R)},\\
			 \|\widetilde{\omega}\|_{C^\gamma(\mathcal A_R)} &\leqslant K\|\omega\|_{C^\gamma(\mathcal C_R)},\\
			\|\widetilde\varphi\|_{L^1(\mathcal{F})} &\leqslant K\|\varphi\|_{L^1(\mathcal{F})}.
		\end{split}\right.
	\end{equation}
Therefore, by setting $\eps = 1$ if $\|{\omega}\|_2 \leqslant 1$, and $\eps = \|{\omega}\|_2^{-\frac{1}{\gamma}}$ otherwise, we get
\begin{equation}
	|J_2| \leqslant K_R\big(1+\max(1, \ln \|\omega\|_2)\|\omega\|_{L^\infty}\big)\|\varphi\|_{L^1}.
\end{equation}

Let us now estimate $J_3$, which can be rewritten as
\begin{equation}
	\begin{split}
		& J_3 = -\int_{\mathbb{R}^2\setminus\mathbb{D}}\Big[\int_{\mathbb{R}^2\setminus\mathbb{D}} {\overline{\mathcal{ K}}_{kl}}(\eta, \xi)\widetilde{\omega}^*(\xi)\dif \xi \Big]\widetilde{\varphi}_{ijkl}(\eta)\dif \eta ,
	\end{split}
\end{equation}
Observing that $\omega$ is supported in $\mathcal C_R$, there must exists $0< \widetilde{R} < 1$ such that $\widetilde{\omega}^*$ is supported in $\overline{\mathcal C_{\widetilde{R}, 1}}$. Then, using again Lemma \ref{Lm:singular-integral-infty} and H\"older's inequality, we arrive at
	\begin{equation}
	|J_3| \leqslant K\Big(\|\widetilde{\omega}^*\|_{C^{\gamma}(\mathcal C_{\widetilde{R}, 1})}\eps^\gamma + \max\big(1,\ln ({\sqrt{m_{\mathcal C_{{\widetilde{R},1}}}}/{\eps}})\big)\|\widetilde \omega^*\|_{L^\infty(\mathcal C_{\widetilde R, 1})} \Big) \|\widetilde\varphi\|_{L^1(\mathcal{F})}.
\end{equation}
Furthermore, by using Lemma \ref{lm:biholomorphism}, it can be checked that
\begin{equation}
	\begin{split}
		&\|\widetilde{\omega}^*\|_{L^\infty(\mathcal C_{\widetilde{R}, 1})} \leqslant K\|\omega\|_{L^\infty(\mathcal{C}_R)}\text{ and } \|\widetilde{\omega}^*\|_{C^\gamma(\mathcal C_{\widetilde{R}, 1})} \leqslant K\|\omega\|_{C^\gamma(\mathcal C_R)}.
	\end{split}
\end{equation}
Then, we take $\eps = 1$ if $\| \omega\|_2 \leqslant 1$ and $\|  \omega\|_2^{-\frac{1}{\gamma}}$ otherwise. It follows that
\begin{equation}
|J_3| \leqslant K_R\big(1+\max(1, \ln \|\omega\|_2)\|\omega\|_{L^\infty}\big)\|\varphi\|_{L^1}.
\end{equation}

It remains to estimate $J_4$, which reads as
	\begin{equation}\label{Def:j4}
		\begin{split}
	J_4 =& \int_{\mathcal{F}}\Big[\int_{\mathcal{F}}\big(\mathcal{K}(x,y) - \mathcal K^*(x, y)\big)\omega(y)\dif y\Big]\cdot \partial_i\partial_j\mathcal{T}(x)\varphi(x)\dif x.
	\end{split}
\end{equation}
Noting that $\mathcal K(x, y) \approx |x-y|^{-1}$, which exhibits completely different asymptotic behavior near zero and at infinity, we thus need to partition the integral  region. Indeed, on the one hand, by using H\"older's inequality, we infer that
\begin{equation}
	\Big|\int_{|x-y|<1}\mathcal{K}(x,y)\omega(y)\dif y\Big| \leqslant K\|\omega\|_{L^\infty}.
\end{equation}
On the other hand, by combining with the fact that $\omega$ is supported in $\overline{\mathcal{C}_R}$, we obtain
\begin{equation}
	\Big|\int_{|x -y|>1}\mathcal{K}(x,y)\omega(y)\dif y\Big| \leqslant K\|\omega\|_{L^1} \leqslant K_R \|\omega\|_{L^\infty}.
\end{equation}
Therefore, we apply H\"older's inequality to conclude that
\begin{equation}\label{Est:k-1}
	\int_{\mathcal{F}}\Big[\int_{\mathcal{F} }\mathcal{K}(x,y)\omega(y)\dif y\Big]\cdot \partial_i\partial_j\mathcal{T}(x)\varphi(x)\dif x \leqslant K_R\|\omega\|_{L^\infty}\|\varphi\|_{L^1}.
\end{equation}

The part involving the kernel $\mathcal K^*$ in integral \eqref{Def:j4} can be handled analogously. Indeed, for  $|\mathcal T(x) > 1$ and $|\mathcal T(y)| > 2$, we have $\mathcal K^*(x, y) < 2$. Hence,
\begin{equation}\label{Est:w-1}
	\Big|\int_{|T(y)|>2}\mathcal{K}^*(x,y)\omega(y)\dif y\Big| \leqslant K\|\omega\|_{L^1} \leqslant K_R \|\omega\|_{L^\infty}.
\end{equation}
To consider the integral over the region where $|\mathcal T(y)| < 2$, we need make the change of variables $\eta = \big(T(y)\big)^*$. Then, we see that
\begin{equation*}
	\Big|\int_{|T(y)|<2\cap \mathcal{F}}\mathcal{K}^*(x,y)\omega(y)\dif y\Big| \leqslant K\int_{\frac{1}{2} <|\eta| < 1} |\mathcal T(x) - \eta|^{-1} |\omega(\mathcal T^{-1}(\eta^*))|\dif \eta,
\end{equation*}
which implies
\begin{equation}\label{Est:w-2}
	\Big|\int_{|T(y)|<2\cap \mathcal{F}}\mathcal{K}^*(x,y)\omega(y)\dif y\Big| \leqslant K\|\omega\|_{L^\infty}.
\end{equation}
Therefore,
\begin{equation}\label{Est:k-2}
		\int_{\mathcal{F}}\Big[\int_{\mathcal{F} }\mathcal{K}^*(x,y)\omega(y)\dif y\Big]\cdot \partial_i\partial_j\mathcal{T}(x)\varphi(x)\dif x \leqslant K_R\|\omega\|_{L^\infty}\|\varphi\|_{L^1}.
\end{equation}
Then, we infer from \eqref{Est:k-1} and \eqref{Est:k-2} that
\begin{equation}
	|J_4| \leqslant K_R\|\omega\|_{L^\infty}\|\varphi\|_{L^1}.
\end{equation}

Combining the bounds for $J_i$, $i=1,\dots,4$, in which $\varphi$ is arbitrary, immediately yields \eqref{Est:nabla-compact}. This completes the proof of Proposition \ref{Pr:infty-exterior}.
\end{proof}

The above results yield several important estimates for divergence-free vector fields. More precisely, let \(s > 0\) be an integer, and suppose that \(u \in L^2_\sigma(\mathcal{F}) \cap H^s(\mathcal{F})\). Define \(\omega = \curl u\). By Lemma 2.3 of \cite{You2024}, there exists a scalar function \(\psi \in \dot{H}_0^1\) such that \(u = \nabla^\perp \psi\). Let $\varphi$ be an arbitrary smooth function in $\mathbb{R}^2$ such that $\varphi \equiv 1$ in $B(0, 1)$, $0 \leqslant \varphi \leqslant 1$, and $\varphi$ vanishes outside $B(0, 2)$. For $\eps > 0$, set $\varphi^\eps(x) = \varphi(\eps x)$, $u^\eps = \nabla^\perp (\varphi^\eps\psi)$, and $\omega^\eps = \curl u^\eps$. Since \(\omega^\eps\) is compactly supported, there exists a solution \(\psi^\eps \in \dot{H}_0^1(\mathcal F)\) to Eqs.~\eqref{Equation:Poisson} with \(\omega\) replaced by \(\omega^\eps\). It then follows from Proposition \ref{Proposition:Poisson-2} that
\begin{equation}
	\|D^{s} u^\eps\|_0 \leqslant K\big(\|\omega^\eps\|_{s-1} + \|u^\eps\|_0\big).
\end{equation}
Taking \(\eps \rightarrow 0\), we obtain
\begin{equation}\label{Estimate:u-s}
	\|D^{s} u\|_0 \leqslant K\big(\|\omega\|_{s-1} + \|u\|_0\big).
\end{equation}
Moreover, we have the following Beale--Kato--Majda type bound for the field $u$.
\begin{proposition}\label{Pro:Poisson-3} Suppose that $u\in L^2_\sigma(\mathcal{F}) \cap H^s(\mathcal{F})$. Denote $\omega = \curl u$. Then, there exists a constant $K$ such that
	\begin{equation}\label{Estimate:u-infty}
		\begin{aligned}
			\|\nabla u\|_{L^\infty}  \leqslant K(1+\ln^+\|u\|_{3})(1+\|u\|_{0}+\|\omega\|_{L^\infty}).
		\end{aligned}
	\end{equation}
	where $\ln^+x$ denotes $\ln x$ for $x > 1$ and 0 otherwise.
\end{proposition}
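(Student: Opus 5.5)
The plan is a localization argument: split $u$ into a far-field part handled by the whole-plane bound (Lemma \ref{Lm:whole-nabla}) and a near-field part handled by the exterior Poisson bound (Proposition \ref{Pr:infty-exterior}). Fix a cutoff $\chi\in C^\infty(\mathbb{R}^2)$ with $\chi\equiv 0$ on $B(0,R_1)$ and $\chi\equiv1$ outside $B(0,R_2)$, where $\mathcal S\subset\subset B(0,R_1)$. Write $u=\nabla^\perp\psi$ with $\psi\in\dot H^1_0(\mathcal F)$, as in the discussion preceding the statement. We may assume $\|u\|_3$ is finite; otherwise the right-hand side is infinite and there is nothing to prove.

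\textbf{Far field.} Set $v:=\nabla^\perp(\chi\psi)$. Then $v$ is divergence-free on $\mathbb{R}^2$ and $v=u$ outside $B(0,R_2)$. Its curl is
\[
\curl v=\chi\omega+2\nabla\chi\cdot\nabla\psi+\Delta\chi\,\psi .
\]
The last two terms are supported in the bounded annulus $\mathcal C_{R_1,R_2}$. There we control them in $L^\infty$ via Sobolev embedding $H^2\hookrightarrow L^\infty$ on the bounded set, Poincar\'e's inequality (using $\psi=0$ on $\partial\mathcal S$), and \eqref{Estimate:u-s} with $s=1$. This should give
\[
\|\psi\|_{L^\infty(\mathcal C_{R_2})}+\|\nabla\psi\|_{L^\infty(\mathcal C_{R_2})}\le K\|u\|_{0}+K\|\omega\|_{L^\infty}.
\]
Getting this bound in terms of $\|u\|_0$ and $\|\omega\|_{L^\infty}$, without a logarithm, is the delicate point. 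My first attempt is to write $\nabla\psi$ on the annulus through the Green representation \eqref{Def:psi}, splitting the kernel as in Part 1 of the proof of Proposition \ref{Proposition:Poisson}. Alternatively, one can accept a logarithmic loss here, since it is absorbed into the final factor anyway. Similarly $\|v\|_3\le K\|u\|_3$. Applying Lemma \ref{Lm:whole-nabla} to $v$ then gives
\[
\|\nabla u\|_{L^\infty(\mathcal F\setminus B(0,R_2))}\le K\,(1+\ln^+\|u\|_3)\,(1+\|u\|_0+\|\omega\|_{L^\infty}).
\]

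\textbf{Near field.} Let $\theta=1-\tilde\chi$, where $\tilde\chi$ is a similar cutoff vanishing on $B(0,R_2)$ and equal to $1$ outside $B(0,R_3)$. Then $\theta\psi$ vanishes on $\partial\mathcal S$ and solves \eqref{Equation:Poisson} with right-hand side
\[
\omega_\theta:=\theta\omega+2\nabla\theta\cdot\nabla\psi+\Delta\theta\,\psi,
\]
which is supported in $\overline{\mathcal C_{R_3}}$. By the uniqueness part of Proposition \ref{Proposition:Poisson}, $\theta\psi$ is the solution given there. Proposition \ref{Pr:infty-exterior} with $R=R_3$ then yields
\[
\|D^2(\theta\psi)\|_{L^\infty}\le K\big(1+\max(1,\ln\|\omega_\theta\|_2)\,\|\omega_\theta\|_{L^\infty}\big).
\]
We have $\|\omega_\theta\|_2\le K\|u\|_3$, so $\max(1,\ln\|\omega_\theta\|_2)\le K(1+\ln^+\|u\|_3)$. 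Also $\|\omega_\theta\|_{L^\infty}\le \|\omega\|_{L^\infty}+K\|\psi\|_{W^{1,\infty}(\mathcal C_{R_3})}$, and the last term is bounded by the same local estimate as in the far-field step. Since $\theta\equiv1$ on $\mathcal C_{R_2}$, this controls $\|\nabla u\|_{L^\infty(\mathcal C_{R_2})}$.

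Combining the two regions gives \eqref{Estimate:u-infty}. The main obstacle is the lower-order $L^\infty$ control of $\psi$ and $\nabla\psi$ on the bounded annulus by $\|u\|_0+\|\omega\|_{L^\infty}$, without picking up extra powers of $\|u\|_3$. If necessary, I would instead use the interpolation
\[
\|\nabla\psi\|_{L^\infty}\le K\|u\|_{0}^{1/2}\|u\|_{2}^{1/2}
\]
on the annulus and absorb it, or use the logarithmic Sobolev trick, so that only $\ln^+\|u\|_3$ appears.
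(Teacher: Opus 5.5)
Your decomposition matches the paper's: cutoff splitting of $\psi$, Proposition~\ref{Pr:infty-exterior} near $\mathcal S$, Lemma~\ref{Lm:whole-nabla} at infinity. Using two overlapping cutoffs instead of $\varphi\psi+(1-\varphi)\psi$ is immaterial, and using uniqueness to identify $\theta\psi$ with the Green solution is a good touch. But there is a genuine gap at the step you flag as delicate, which is the real content of the proof:
\[
\|\psi\|_{W^{1,\infty}(A)}\le K\big(\|u\|_0+\|\omega\|_{L^\infty}\big),\qquad A=\mathcal C_{R_1,R_3},
\]
with no logarithm. None of your proposed routes delivers this bound.
\begin{itemize}
\item $H^2\hookrightarrow L^\infty$ with \eqref{Estimate:u-s} for $s=1$ controls $\psi$ but not $\nabla\psi$, since in two dimensions $H^1$ does not embed into $L^\infty$. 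Moreover \eqref{Estimate:u-s} is global and brings in $\|\omega\|_{L^2(\mathcal F)}$, which $\|\omega\|_{L^\infty}$ does not control on an unbounded domain.
\item A logarithmic loss is \emph{not} absorbed. It multiplies the factor $1+\ln^+\|u\|_3$ from Lemma~\ref{Lm:whole-nabla} (and $\max(1,\ln\|\omega_\theta\|_2)$ from Proposition~\ref{Pr:infty-exterior}), giving $(1+\ln^+\|u\|_3)^2$. That is not dominated by the right side of \eqref{Estimate:u-infty} when $\|u\|_0+\|\omega\|_{L^\infty}$ stays bounded and $\|u\|_3\to\infty$. It also matters downstream: with a squared logarithm the Gr\"onwall step of Section 4 becomes $y'\le Ky^2$ and no longer gives a global bound.
\item The interpolation $\|u\|_0^{1/2}\|u\|_2^{1/2}$ puts a power of a higher norm on the right. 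Absorbing $\|u\|_{L^\infty(A)}\le K\|u\|_0^{1/2}\|\nabla u\|_{L^\infty}^{1/2}+K\|u\|_0$ into the left side forces a Young parameter of size $(1+\ln^+\|u\|_3)^{-1}$, which again yields a squared logarithm.
\item The Green representation \eqref{Def:psi} needs integrable vorticity. Here $\omega$ is only in $L^2\cap L^\infty$, and $\nabla_x G_{\mathcal F}(x,y)$ does not decay as $|y|\to\infty$. Applying it to $\theta\psi$ instead is circular, because $\omega_\theta$ contains $\nabla\psi$.
\end{itemize}

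The missing ingredient is a \emph{local} $L^p$ elliptic estimate with $p>2$, which sees $\omega$ only on a bounded set. The paper uses Theorem~9.11 of Gilbarg--Trudinger with $p=4$, followed by Morrey's embedding. Since $\overline{\mathcal S}\subset B(0,R_1)$, we have $A\subset\subset\Omega:=\mathcal C_{R_3+1}$, and
\[
\|\psi\|_{W^{1,\infty}(A)}\le K\|\psi\|_{W^{2,4}(A)}\le K\big(\|\psi\|_{L^4(\Omega)}+\|\omega\|_{L^4(\Omega)}\big)\le K\big(\|u\|_0+\|\omega\|_{L^\infty}\big).
\]
In the last step, $\|\omega\|_{L^4(\Omega)}\le K\|\omega\|_{L^\infty}$ because $\Omega$ is bounded. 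Also $\|\psi\|_{L^4(\Omega)}\le K\|\psi\|_{H^1(\Omega)}\le K\|u\|_{L^2(\Omega)}$ by Sobolev (Ladyzhenskaya) and Poincar\'e, using $\psi=0$ on $\partial\mathcal S$ and $|\nabla\psi|=|u|$. Insert this bound for $\nabla\chi\cdot\nabla\psi$, $\Delta\chi\,\psi$, $\nabla\theta\cdot\nabla\psi$ and $\Delta\theta\,\psi$ in your far-field and near-field steps. The rest of your argument then goes through and gives \eqref{Estimate:u-infty} with a single logarithm.
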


\begin{proof}
	Without loss of generality, we assume that $\mathcal{S} \subset\!\subset B(0, 1)$, and let $\varphi$ be an arbitrary smooth function in $\mathbb{R}^2$ such that $\varphi \equiv 1$ in $B(0, 1)$, $0 \leqslant \varphi \leqslant 1$, and $\varphi$ vanishes outside $B(0, 2)$. Then, we split $\psi$ into two pieces, i.e.,
	\begin{equation}\label{Def:Psi-divided}
		\psi = \varphi\psi + (1-\varphi)\psi=: \psi_{b} + \psi_h.
	\end{equation} 
	On the one hand, it can be checked that $\psi_b$ satisfies the following Poisson equation in $\mathcal{F}$:
	\begin{equation}
		\begin{split}
			&\Delta \psi_b = \omega_b, \quad \psi|_{\partial \mathcal{S}} = 0, 
		\end{split}
	\end{equation}
	where
	\begin{equation}
		\begin{split}
			\omega_b  &= \varphi\omega  + 2 \nabla \varphi \cdot \nabla \psi + \Delta \varphi\psi.
		\end{split}
	\end{equation}
	Noting that $\omega_b$ is supported in $\overline{B(0, 2)}$,  we infer from Proposition \ref{Pr:infty-exterior} that
	\begin{equation}\label{Est:nabla-compact-1}
		\|D^2\psi_b\|_{L^\infty} \leqslant K\big(1+\max(1, \ln \|\omega_b\|_2)\|\omega_b\|_{L^\infty}\big).
	\end{equation}
	It is notable that the constant $K$ in the above inequality does not depend on the support of $\omega$.
	
	On the other hand, by using the fact $(1-\varphi)$ being zero in $B(0, 1)$, we infer that $\psi$ satisfies the Poisson equation in the full-plane, i.e.,
	\begin{equation}
		\Delta \psi_h = \omega_h,
	\end{equation}
	with
	\[
	\omega_h =   (1-\varphi)\omega  - 2 \nabla \varphi \cdot \nabla \psi - \Delta \varphi\psi.
	\]
	It follows from Lemma \ref{Lm:whole-nabla} that
	\begin{equation}\label{Est:h}
		\|D^2\psi_h\|_{L^\infty}  \leqslant K(1+\ln^+\|\nabla\psi_h\|_{3})(1+\|\omega_h\|_{L^\infty}).
	\end{equation}
	
	Collecting \eqref{Est:nabla-compact-1} and \eqref{Est:h}, we deduce that it is sufficient to verify
	\begin{equation}\label{Est:t1}
		\|\omega_b\|_{L^\infty} + \|\omega_h\|_{L^\infty} \leqslant K(\|u\|_0 + \|\omega\|_{L^\infty}).
	\end{equation}
	
	Let us first treat with $\|\Delta \varphi \psi\|_{L^\infty}$. Indeed, by using the fact $\Delta \varphi$ being supported in $\overline{\mathcal{C}_{1,2}}$, we infer from  Theorem 9.11 in \cite{gilbarg2001elliptic} that
	\begin{equation}
		\|\Delta \varphi \psi\|_{L^\infty} \leqslant  K\|\psi\|_{H^2(\mathcal{C}_{1,2})} \leqslant K(\|\psi\|_{L^2(\mathcal C_{3})} + \|\omega\|_{L^2(\mathcal C_{3})}).
	\end{equation}
	Furthermore, since $\psi$ vanishes on $\partial \mathcal{S}$, we apply Poincar\'e's inequality to obtain
	\begin{equation}
		\|\psi\|_{L^2(\mathcal C_{3})} \leqslant \|u\|_{L^2(\mathcal C_{3})}.
	\end{equation}
	Therefore,
	\begin{equation}\label{Est-delta-varphi-psi}
		\|\Delta \varphi \psi\|_{L^\infty} \leqslant  K(\|u\|_{0} + \|\omega\|_{L^\infty}).
	\end{equation}

	The term $\|\nabla\varphi\cdot\nabla\psi\|_{L^\infty}$ can be estimated similarly. Indeed, using Morrey’s inequality, we know that
	\begin{equation}
		\|\nabla\varphi\cdot \nabla \psi\|_{L^\infty} \leqslant \|\nabla \psi\|_{W^{1,4}(\mathcal{C}_{1,2})}.
	\end{equation}
	Then, by applying Theorem 9.11 in \cite{gilbarg2001elliptic} again, we obtain
	\begin{equation}\label{Est-delta-varphi-psi-1}
		\|\nabla\psi\|_{W^{1,4}(\mathcal{C}_{1,2})} \leqslant K(\|\psi\|_{L^4(\mathcal{C}_{3})} +\|\omega\|_{L^4(\mathcal{C}_{3})}).
	\end{equation}
	Furthermore,  Ladyzhenskaya's inequality and Poincar\'e's inequality tell
	\begin{equation}\label{Est-delta-varphi-psi-2}
		\|\psi\|_{L^4(\mathcal{C}_{3})} \leqslant K\|\psi\|_{H^1(\mathcal{C}_{3})} \leqslant K\|u\|_{L^2(\mathcal{C}_{3})},
	\end{equation}
	which implies
	\begin{equation}\label{Est:-nabla-varphi-nabla-psi}
		\|\nabla\varphi\cdot \nabla \psi\|_{L^\infty} \leqslant K(\|u\|_{0} + \|\omega\|_{L^\infty}).
	\end{equation} 
	
	Collecting \eqref{Est-delta-varphi-psi} and \eqref{Est:-nabla-varphi-nabla-psi}, we obtain \eqref{Est:t1} immediately. This completes the proof of Proposition \ref{Pro:Poisson-3}.
	
\end{proof}

\section{A priori estimates}

Unless otherwise stated, the $H^s$ and $L^\infty$ norms appearing in this section are defined with respect to the domain $\mathcal{F}(t)$. We first show that the kinetic energy $E_0(t)$ is conserved.
\begin{proposition}\label{Pr:uniform-low}
	Let $s \geqslant 3$ be fixed. Suppose that $(u, p, h, r) \in X_{s;\infty}$ is a solution to \eqref{Equ:euler-1}--\eqref{Equ:euler-4}. Then for all $ t > 0$, we have
	\begin{align}\label{energy-inequality-low}
	E_0(t) = E_0(0).
	\end{align}
\end{proposition}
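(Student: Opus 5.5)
The plan is to take the $L^2(\mathcal{F}(t))$ inner product of \eqref{Equ:euler-1a} with $u$. Then we integrate by parts using the Reynolds transport theorem on the moving domain $\mathcal{F}(t)$, and convert the boundary terms into the time derivative of the rigid-body kinetic energy via \eqref{Equ:euler-2}--\eqref{Equ:euler-3a}.

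\textbf{Truncation.} Since $\mathcal{F}(t)$ is unbounded, first work on $\mathcal{F}(t)\cap B(0,\rho)$ for large $\rho$, or equivalently multiply by a cutoff $\varphi^\eps(x)=\varphi(\eps x)$ as in Section 3, and pass to the limit. Because $u\in C([0,T);H^s)$ with $s\geqslant3$, the quantities $u$ and $\nabla u$ are bounded and lie in $L^2$. Also $\nabla p\in H^{s-1}$, so $p$ is determined up to a constant and $\nabla p\in L^2$. The commutator terms carry a factor $|\nabla\varphi^\eps|\leqslant K\eps$ supported on $\{\eps^{-1}<|x|<2\eps^{-1}\}$:
\begin{itemize}
\item $\int(|u|^2/2+p)\,u\cdot\nabla\varphi^\eps$ tends to zero, using $u\in L^2\cap L^\infty$ for the first part.
\item For the pressure part we need more care. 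Replace $p$ by $p-c_\eps$, with $c_\eps$ its average on the annulus; this is allowed because $\div u=0$ and $\int u\cdot\nabla\varphi^\eps$ is controlled. Then use a Poincaré inequality on annuli, $\|p-c_\eps\|_{L^2(\mathrm{annulus})}\leqslant K\eps^{-1}\|\nabla p\|_{L^2(\mathrm{annulus})}$. The term is then bounded by $K\|u\|_{L^2(\mathrm{ann})}\|\nabla p\|_{L^2(\mathrm{ann})}\to0$.
\end{itemize}
The constant $c_\eps$ must also be accounted for on $\partial\mathcal{S}(t)$. Its contribution there is $c_\eps\int_{\partial\mathcal{S}(t)}u\cdot n\,d\sigma=0$, since the rigid velocity $h'+r(x-h)^\perp$ is divergence free.

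\textbf{Energy computation.} With the fluid velocity transporting the domain (the boundary moves with normal velocity $u\cdot n$ by \eqref{Equ:euler-3a}), we get
\[
\frac{d}{dt}\frac12\int_{\mathcal{F}(t)}|u|^2\dif x=\int_{\mathcal{F}(t)}(\partial_t u+u\cdot\nabla u)\cdot u\dif x=-\int_{\mathcal{F}(t)}\nabla p\cdot u\dif x .
\]
Here the convective term has been absorbed by the transport theorem. Integrating by parts with $\div u=0$, and recalling that $n$ points into $\mathcal{S}(t)$, i.e.\ outward from the fluid, gives
\[
-\int_{\mathcal{F}(t)}\nabla p\cdot u\dif x=-\int_{\partial\mathcal{S}(t)}p\,u\cdot n\dif\sigma=-\int_{\partial\mathcal{S}(t)}p\,(h'+r(x-h)^\perp)\cdot n\dif\sigma .
\]
By \eqref{Equ:euler-2} this equals $-mh''\cdot h'-Jr'r=-\frac{d}{dt}\frac12(m|h'|^2+J r^2)$.

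Hence $\frac{d}{dt}E_0=0$. The signs depend on the orientation of $n$ and need to be checked so that they are consistent with \eqref{Equ:euler-2}. Integrating in time gives $E_0(t)=E_0(0)$.

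\textbf{Regularity and the main obstacle.} The regularity $u\in C([0,T);H^s)$ with $s\geqslant3$ justifies the transport theorem and the time differentiation. Alternatively, one can change variables to the body frame to fix the domain, then differentiate. The main obstacle is the justification at infinity: $p$ is controlled only through $\nabla p\in L^2$, so we need the annular Poincaré argument above, together with the observation that the flux of $u$ through $\partial\mathcal{S}(t)$ vanishes, so that the normalizing constant causes no boundary contribution.
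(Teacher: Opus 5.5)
Your proposal is correct and follows the same route as the paper: pair \eqref{Equ:euler-1a} with $u$, use the Reynolds transport theorem together with $\div u=0$, integrate the pressure term by parts to get $\int_{\partial\mathcal{S}(t)} p\,u\cdot n\dif\sigma$, and turn this into $\frac{\dif}{\dif t}\frac12\bigl(m|h'|^2+J r^2\bigr)$ via \eqref{Equ:euler-3a} and \eqref{Equ:euler-2}. Your cutoff argument justifies dropping the contribution at infinity, which the paper leaves implicit even though $p$ is only known to lie in $\dot H^s$: it uses the annular Poincar\'e inequality for $p-c_\eps$, with the vanishing flux $\int_{\partial\mathcal{S}(t)}u\cdot n\dif\sigma=0$ absorbing the constant.
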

\begin{proof}
	For $t \in (0, \infty)$, by taking the inner product of \eqref{Equ:euler-1a} with $u$, we obtain
	\begin{equation}\label{Equ:low-id1}
		\int_{\mathcal{F}(t)} \partial_t u \cdot u \dif x + \int_{\mathcal{F}(t)} [u\cdot \nabla u] \cdot u \dif x + \int_{\mathcal{F}(t)} \nabla p \cdot u \dif x = 0.
	\end{equation}
	On the one hand, the Reynolds transport theorem combined with the fact that $\div u = 0$ in $\mathcal{F}(t)$ implies
	\begin{equation}\label{Equ:low-id2}\int_{\mathcal{F}(t)} \partial_t u \cdot u \dif x + \int_{\mathcal{F}(t)} [u\cdot \nabla u] \cdot u \dif x = \frac{1}{2}\frac{\dif}{\dif t} \int_{\mathcal{F}(t)}|u|^2 \dif x.
	\end{equation}
	On the other hand, by using \eqref{Equ:euler-2a}, \eqref{Equ:euler-2b} and \eqref{Equ:euler-3a} we obtain
	\begin{equation}\label{Equ:low-id3}
		\begin{split}
			\int_{\mathcal{F}(t)} \nabla p \cdot u \dif x &= \int_{\partial\mathcal{S}(t)} p u \cdot n = \frac{m}{2}\frac{\dif}{\dif t}|h'(t)|^2 + \frac{J}{2}\frac{\dif}{\dif t}|r(t)|^2.
		\end{split}
	\end{equation}
	Therefore, by substituting \eqref{Equ:low-id2} and \eqref{Equ:low-id3} into \eqref{Equ:low-id1}, we obtain 
	\begin{equation}
		\frac{\dif}{\dif t}\Big[\int_{\mathcal{F}(t)} |u|^2 \dif x + m|h'|^2 + J|r|^2\Big] = 0,
	\end{equation}
	which yields immediately \eqref{energy-inequality-low} by integrating over $(0, t)$.
\end{proof}

In order to obtain high-order derivatives of $u$, we need to establish a bound of $\|\nabla u\|_{L^\infty}$. It is notable that the velocity field $u$ does not satisfy the slip condition on the boundary $\partial\mathcal{S}(t)$, thus Proposition \ref{Pro:Poisson-3} cannot be applied directly. To address this issue,  we construct manually a boundary corrector $\Lambda$ that is smooth and satisfies $\Lambda\cdot n = u \cdot n$ on $\mathcal{S}(t)$. To this end, we begin by introducing two auxiliary functions:
\begin{equation}
	\begin{split}
		&v(x, t) = {h}'(t) + r(t)(x - h(t))^\perp, \quad \forall x \in \mathbb{R}^2, \ t\in [0, \infty),\\
		&U(x, t) = -{h}'(t)^\perp \cdot  (x - h(t)) + r(t) \frac{|x-h(t)|^2}{2}, \quad \forall x \in \mathbb{R}^2,\ t\in [0, \infty).
	\end{split}
\end{equation}
Without loss of generality, we assume $\mathcal{S} \subset\!\subset B(0, 1)$. Let $\varphi$ be a smooth function on $\mathbb{R}^2$ satisfying $\varphi(x) \equiv 0$ for $0 \leqslant |x| \leqslant 1$, $\varphi \geqslant 0$, and $\varphi \equiv 1$ for $|x| \geqslant 2$. We denote $\varphi_h(x) = \varphi(x - h(t))$.
Then, for all $x \in \mathbb{R}^2$ and  $t\in [0, \infty)$, we define $\Lambda(x, t)$ by
\begin{equation}\label{Def:Lambda}
	\Lambda(x, t) = (\frac{\partial \varphi_h}{\partial x_2} U + (1-\varphi_h)v_1, -\frac{\partial \varphi_h}{\partial x_1} U + (1-\varphi_h)v_2).
\end{equation}
\begin{lemma}\label{Lm:boundary-corrector} Suppose $(u, p, h, r) \in X_{s;\infty}$ is a solution to \eqref{Equ:euler-1}--\eqref{Equ:euler-4}. Then, for all $t \in [0, \infty)$,  we have
	\begin{equation}\label{Pr:property}\left\{
		\begin{aligned} &\Lambda = 0 \text{ outside } B(h(t),2) \text{ and } \Lambda \cdot n = u \cdot n \text{ on } \partial\mathcal{S}(t),\\
		 &\div \Lambda = 0 \text{ in } \mathbb{R}^2.\\
		 \end{aligned}\right.
	\end{equation}
\end{lemma}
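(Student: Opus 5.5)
The proof is a direct computation based on the observation that $v$ is the perpendicular gradient of $U$. Indeed, with $x^\perp=(-x_2,x_1)$ and $\nabla^\perp f := (-\partial_2 f, \partial_1 f)$, we have
\[
\nabla U = -h'^\perp + r(x-h), \qquad \nabla^\perp U = \bigl(h_1' - r(x_2-h_2),\, h_2' + r(x_1-h_1)\bigr) = h' + r(x-h)^\perp = v .
\]
Also $\Lambda = -U\nabla^\perp\varphi_h + (1-\varphi_h)v$, since $-\nabla^\perp\varphi_h = (\partial_2\varphi_h, -\partial_1\varphi_h)$. Substituting $v=\nabla^\perp U$ gives
\[
\Lambda = -U\nabla^\perp\varphi_h + (1-\varphi_h)\nabla^\perp U = \nabla^\perp\bigl((1-\varphi_h)U\bigr).
\]
This identity settles all three properties at once.

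\textbf{Divergence-free.} $\Lambda$ is the perpendicular gradient of the smooth function $(1-\varphi_h)U$, so $\div\Lambda = -\partial_1\partial_2 + \partial_2\partial_1 = 0$ in $\mathbb{R}^2$.

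\textbf{Support.} Outside $B(h(t),2)$ we have $\varphi_h\equiv1$, hence $(1-\varphi_h)U\equiv 0$ on this open set, and so $\Lambda=0$ there.

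\textbf{Boundary condition.} Since $\mathcal S\subset\subset B(0,1)$ and the body moves rigidly with center $h(t)$, we get $\mathcal S(t)\subset B(h(t),1)$, assuming the center of mass lies at the origin initially, as in \eqref{Equ:euler-4b}. On $B(h(t),1)$ we have $\varphi_h\equiv 0$ and $\nabla\varphi_h\equiv0$, so $\Lambda = v$ on a neighbourhood of $\partial\mathcal S(t)$. The boundary condition \eqref{Equ:euler-3a} then gives $\Lambda\cdot n = v\cdot n = u\cdot n$ on $\partial\mathcal S(t)$.

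The only point needing care is $\mathcal S(t)\subset B(h(t),1)$. Here I would use that $\mathcal S(t)$ is a rotation of $\mathcal S$ about $h(t)$ followed by translation by $h(t)$, and that $\mathcal S\subset B(0,1)$ with $h(0)=0$ is the body's center. Distances from the center are preserved by the rigid motion, so every point of $\mathcal S(t)$ is within distance $1$ of $h(t)$. Everything else is the sign bookkeeping in $\nabla^\perp U = v$, which I would double-check componentwise as above.
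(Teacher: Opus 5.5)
Your proof is correct and follows the same direct verification as the paper: $\Lambda=v$ near $\partial\mathcal S(t)$ because $\varphi_h$ and $\nabla\varphi_h$ vanish on $B(h(t),1)\supset\partial\mathcal S(t)$, $\Lambda$ vanishes where $\varphi_h\equiv 1$, and solenoidality comes from the structure of the definition. Your identity $\Lambda=\nabla^\perp\bigl((1-\varphi_h)U\bigr)$, via $\nabla^\perp U=v$, spells out what the paper's ``follows straightforwardly from the definition'' leaves implicit. Your support argument also correctly attributes the localization to $1-\varphi_h$ and $\nabla\varphi_h$, rather than to $\varphi_h$ itself as the paper loosely writes.
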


\begin{proof} Noting that $\varphi_h$ is supported in the $\overline{B(h(t), 2)}$, we thus have that $\Lambda$ vanishes outside $B(h(t), 2)$. Furthermore,  for those $x \in \partial\mathcal{S}(t)$, it can be checked that $\varphi_h(x) = 0$ and $\nabla \varphi_h(x) = 0$, which implies $\Lambda \cdot n = u \cdot n$ on $\partial\mathcal{S}(t)$. The  solenoidality of $\Lambda$ follows straightforwardly from the definition. This completes the proof of Lemma \ref{Lm:boundary-corrector}.
\end{proof}

With the above lemma, we are ready to establish a $L^\infty$ bound for $\nabla u$.

\begin{proposition}\label{Pr:nabla-u-infty} Suppose $(u, p, h, r) \in \mathscr X_{s; \infty}$ is a solution of \eqref{Equ:euler-1}--\eqref{Equ:euler-4}, and set $\omega = \curl u$. Then we have
	\begin{equation}\label{Est:infty-u}
		\begin{split}
			\|\nabla& u\|_{L^\infty}  \leqslant K\big(1+\ln^+\sqrt{E_3(t)}\big)\big(1+\sqrt{E_0(t)}+\|\omega_0\|_{L^\infty}\big).
		\end{split}
	\end{equation}
\end{proposition}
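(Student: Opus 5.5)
We would subtract the boundary corrector $\Lambda$ of Lemma \ref{Lm:boundary-corrector} from $u$ and apply Proposition \ref{Pro:Poisson-3} to the difference. Set $w := u - \Lambda$ on $\mathcal{F}(t)$. By Lemma \ref{Lm:boundary-corrector}, $\div w = 0$ in $\mathcal{F}(t)$ and $w\cdot n = 0$ on $\partial\mathcal{S}(t)$. Since $\Lambda$ is compactly supported and smooth, $w \in L^2_\sigma(\mathcal{F}(t))\cap H^s(\mathcal{F}(t))$. The domain $\mathcal{F}(t)$ is a rigid motion of $\mathcal{F}$: it is obtained by a rotation by $\theta(t)=\int_0^t r$ and a translation by $h(t)$. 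Hence the constants in Proposition \ref{Pro:Poisson-3} transfer to $\mathcal{F}(t)$ unchanged, because the norms involved are invariant under rigid motions; we would state this invariance explicitly. Proposition \ref{Pro:Poisson-3} then gives
\begin{equation*}
\|\nabla w\|_{L^\infty} \leqslant K\bigl(1+\ln^+\|w\|_3\bigr)\bigl(1+\|w\|_0+\|\curl w\|_{L^\infty}\bigr).
\end{equation*}

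Next we estimate the corrector. From \eqref{Def:Lambda}, $\Lambda$ is linear in $(h'(t), r(t))$ with coefficients given by fixed smooth functions of $x-h(t)$ supported in $B(h(t),2)$. Therefore, for every $k$,
\begin{equation*}
\|\Lambda\|_{W^{k,\infty}}+\|\Lambda\|_{H^k} \leqslant K_k\bigl(|h'|+|r|\bigr)\leqslant K_k\sqrt{E_0(t)}.
\end{equation*}
Here we use that $m,J>0$ and that $E_0(t)=E_0(0)$ by Proposition \ref{Pr:uniform-low}. This yields three bounds:
\begin{equation*}
\|\nabla\Lambda\|_{L^\infty}\leqslant K\sqrt{E_0}, \qquad \|w\|_0\leqslant K\sqrt{E_0}, \qquad \|\curl w\|_{L^\infty}\leqslant \|\omega\|_{L^\infty}+K\sqrt{E_0}.
\end{equation*}
For the logarithmic factor, $\|w\|_3 \leqslant \|u\|_3 + K\sqrt{E_0}\leqslant K\sqrt{E_3(t)}$, and $\ln^+(Kx)\leqslant K'(1+\ln^+x)$. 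Combining these with $\|\nabla u\|_{L^\infty}\leqslant \|\nabla w\|_{L^\infty}+\|\nabla\Lambda\|_{L^\infty}$ gives the bound with $\|\omega(t)\|_{L^\infty}$ in place of $\|\omega_0\|_{L^\infty}$.

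The last step replaces $\|\omega(t)\|_{L^\infty}$ by $\|\omega_0\|_{L^\infty}$, which is the step needing care. In two dimensions, taking the curl of \eqref{Equ:euler-1a} gives the transport equation $\partial_t\omega + u\cdot\nabla\omega = 0$ in $\mathcal{F}(t)$. By \eqref{Equ:euler-3a}, $u\cdot n$ equals the normal velocity of the moving boundary, so the flow map $X(t,\cdot)$ of $u$ sends $\mathcal{F}$ onto $\mathcal{F}(t)$: trajectories starting in the fluid never cross $\partial\mathcal{S}(t)$. The flow is well defined because $u(t)\in H^s\subset C^1$ with $s\geqslant3$, and $u$ decays at infinity, so trajectories exist globally on the interval of existence. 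Along characteristics $\omega(X(t,x),t)=\omega_0(x)$, hence $\|\omega(t)\|_{L^\infty(\mathcal{F}(t))}=\|\omega_0\|_{L^\infty(\mathcal{F})}$. The main obstacle is justifying rigorously that the flow preserves $\mathcal{F}(t)$; the cleanest route is to work in the body frame, where the boundary is fixed and the relative velocity $u-v$ is tangent to it. With this, \eqref{Est:infty-u} follows.
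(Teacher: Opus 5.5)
Your proposal is correct and follows essentially the same route as the paper: subtract the corrector $\Lambda$, pass to the fixed domain $\mathcal{F}$ via the rigid motion $x=Q(t)y+h(t)$ to apply Proposition~\ref{Pro:Poisson-3}, absorb the $\Lambda$-terms using $|h'(t)|+|r(t)|\leqslant K\sqrt{E_0(t)}$, and use transport of vorticity to replace $\|\omega(t)\|_{L^\infty}$ by $\|\omega_0\|_{L^\infty}$ (you do this along characteristics, the paper via the Reynolds transport theorem); your handling of the corrector terms is more explicit than the paper's one-line appeal to Lemma~\ref{Lm:boundary-corrector}. When you write out the rigid-motion invariance, use the pullback $y\mapsto Q(t)^{T}w\bigl(Q(t)y+h(t)\bigr)$: the paper's unrotated $w\bigl(Q(t)y+h(t)\bigr)$ has divergence $-\sin\theta(t)\,(\curl w)\bigl(Q(t)y+h(t)\bigr)$ and fails the slip condition, whereas the rotated pullback preserves $\div w=0$, $w\cdot n=0$, $\curl w$ and all the norms involved.
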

\begin{proof}
	We first set
	\begin{equation}
		\theta(t) = \int_0^t r(\tau)\dif \tau, \quad  Q(t) = \begin{pmatrix}
			\cos\theta(t) & -\sin \theta(t)\\
			\sin \theta(t) & \cos \theta(t)
		\end{pmatrix}.
	\end{equation}
	
	Then, we make a change of variables $x = Q(t)y + h(t)$ and define 
	\[v(y) = u\big( Q(t)y + h(t), t\big) - \Lambda\big( Q(t)y+h(t), t\big),\]
	and denote $\omega_v = \curl v$. It can be checked that $v \in L^2_\sigma(\mathcal{F}) \cap  H^s(\mathcal{F})$.  Therefore, we may apply Proposition \ref{Pro:Poisson-3} to obtain
	\begin{equation}
			\|\nabla v\|_{L^\infty(\mathcal{F})} \leqslant K(1+ \ln^+\|v\|_{H^3(\mathcal{F})})(1+\|v\|_{L^2(\mathcal{F})}+\|\omega_v\|_{L^\infty(\mathcal{F})}).
	\end{equation}
	
The orthogonality of $Q$ ensures that the above two estimates are preserved under the inverse transformation to the original variables. Furthermore, the Reynolds transport theorem implies the conservation of the $L^p$-norm ($p\in [1,\infty]$) of $\omega$ for all time.  The proof of the Proposition \ref{Pr:nabla-u-infty} is then concluded by applying Lemma \ref{Lm:boundary-corrector}.
\end{proof}

By using the boundary corrector $\Lambda$, we also can establish an estimate of $\|D^su\|_0$. Indeed, let $v$ and $\omega_v$ be defined as in the previous proposition, we infer from \eqref{Estimate:u-s} that
\begin{equation}\label{Est:d-s}
	\|D^s v\|_{L^2(\mathcal{F})} \leqslant K(\|\omega_v\|_{H^{s-1}(\mathcal{F})} + \|v\|_{L^2(\mathcal{F})}),
\end{equation} which combining with Lemma \ref{Lm:boundary-corrector} yields immediately the following result.
\begin{proposition}\label{Pro:high-u} Suppose that $(u, p, h, r) \in X_{s; \infty}$ is a solution of \eqref{Equ:euler-1}--\eqref{Equ:euler-4}. Then, for $t > 0$, we have
	\begin{equation}
		\|D^s u\|_0 \leqslant K\big(\|\omega\|_{s-1} + \|u\|_0 + |\dot{h}(t)| + |r(t)|\big).
	\end{equation} 
\end{proposition}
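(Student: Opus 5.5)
The plan is to reuse the change of variables from the proof of Proposition \ref{Pr:nabla-u-infty}. We work in the body frame $x = Q(t)y + h(t)$ and set
\[
v(y) = u\big(Q(t)y+h(t),t\big) - \Lambda\big(Q(t)y+h(t),t\big), \qquad \omega_v = \curl v .
\]
By Lemma \ref{Lm:boundary-corrector}, $v$ is divergence-free and satisfies $v\cdot n = 0$ on $\partial\mathcal S$. Hence $v \in L^2_\sigma(\mathcal F)\cap H^s(\mathcal F)$, and estimate \eqref{Est:d-s}, i.e. \eqref{Estimate:u-s} applied to $v$, holds:
\[
\|D^s v\|_{L^2(\mathcal F)} \leqslant K\big(\|\omega_v\|_{H^{s-1}(\mathcal F)} + \|v\|_{L^2(\mathcal F)}\big).
\]
Since $Q(t)$ is orthogonal and the map $y\mapsto Q(t)y+h(t)$ is a rigid motion, every $L^2$ norm of every derivative is invariant under it. So the same inequality holds in the physical frame on $\mathcal F(t)$, with $v$ replaced by $u-\Lambda$ and $\omega_v$ by $\omega - \curl\Lambda$.

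Next we use the triangle inequality:
\[
\|D^s u\|_0 \leqslant \|D^s(u-\Lambda)\|_0 + \|D^s\Lambda\|_0 \leqslant K\big(\|\omega\|_{s-1} + \|\curl\Lambda\|_{s-1} + \|u\|_0 + \|\Lambda\|_0 + \|D^s\Lambda\|_0\big).
\]
It remains to bound $\|\Lambda\|_{H^s(\mathcal F(t))}$, and hence also $\|\curl\Lambda\|_{H^{s-1}}$, by $K(|\dot h(t)|+|r(t)|)$. By the definition \eqref{Def:Lambda}, $\Lambda$ is a finite combination of products of $\nabla\varphi_h$ or $1-\varphi_h$ with $U$ or $v$. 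These terms are supported in $\overline{B(h(t),2)}$. Moreover, $U$ and $v$ are polynomials in $x-h(t)$ whose coefficients are linear in $(\dot h(t), r(t))$.

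On the ball $B(h(t),2)$ we have $|x-h(t)|\leqslant 2$, so $U$, $v$ and all their derivatives are bounded there by $K(|\dot h|+|r|)$. All derivatives of $\varphi_h$ are translates of derivatives of the fixed function $\varphi$, so they are bounded independently of $t$. The ball also has fixed measure. Together these give
\[
\|\Lambda\|_{H^s(\mathcal F(t))} \leqslant K_s\big(|\dot h(t)|+|r(t)|\big).
\]
Inserting this into the previous inequality yields the claim.

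There is no serious obstacle here, only bookkeeping. The one point to check is that the rigid-motion change of variables commutes with taking $D^s$ in norm. This holds because the derivatives transform by the constant orthogonal matrix $Q(t)$, applied tensorially, which preserves Euclidean norms pointwise. The constant $K$ may depend on $s$ through $\varphi$ and through the constant in \eqref{Estimate:u-s}, which is acceptable here.
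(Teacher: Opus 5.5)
Your proposal is correct and follows the paper's argument. You apply \eqref{Estimate:u-s} to the body-frame field $v$ built from $u-\Lambda$, transfer back by rigid-motion invariance, and absorb $\Lambda$ via Lemma \ref{Lm:boundary-corrector}, making explicit the bound $\|\Lambda\|_{H^s(\mathcal F(t))}\leqslant K(|\dot h(t)|+|r(t)|)$ that the paper leaves implicit.

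One small fix, shared with the paper: the vector must also be rotated, $v(y)=Q(t)^{T}(u-\Lambda)(Q(t)y+h(t))$. With the unrotated definition, $\div_y v$ picks up a term proportional to $\sin\theta(t)\,\curl(u-\Lambda)$, and $v\cdot n=0$ on $\partial\mathcal S$ no longer follows.
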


With the above results, the $H^s$ bound of $u$ can now be established.
\begin{proposition}\label{Pr:uniform}
	Let $T>0$ be fixed and $s \geqslant 3$. Suppose $(u, p, h, r) \in X_{s+1; \infty}$ is a solution to \eqref{Equ:euler-1}--\eqref{Equ:euler-4}. Then the energy bound \eqref{energy-inequality-thm} holds true.
\end{proposition}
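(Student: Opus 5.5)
The plan is to derive a differential inequality for $E_s(t)$ of the form $\frac{d}{dt}E_s \leqslant C(1+\|\nabla u\|_{L^\infty})E_s$. We then feed in the Beale--Kato--Majda bound of Proposition~\ref{Pr:nabla-u-infty} and close with Gronwall, applied twice: once at level $s=3$ to control $\ln^+\sqrt{E_3}$, and then at general $s$. Because of Proposition~\ref{Pr:uniform-low}, $E_0(t)=E_0(0)\leqslant E_1(0)$. The vorticity is transported, so $\|\omega(t)\|_{L^\infty}=\|\omega_0\|_{L^\infty}\leqslant K\sqrt{E_3(0)}$ by Sobolev embedding.

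\textbf{Step 1: vorticity energy estimate.} Since $\omega=\curl u$ satisfies $\partial_t\omega+u\cdot\nabla\omega=0$ in $\mathcal F(t)$ and $\mathcal F(t)$ moves with the rigid velocity field $v$, which agrees with $u\cdot n$ on the boundary, the Reynolds transport theorem produces no boundary flux for $\int_{\mathcal F(t)}|\partial^\alpha\omega|^2$ from the transport term. I will work in the body frame $x=Q(t)y+h(t)$ so the domain is fixed and tangential derivatives are natural. For $|\alpha|\leqslant s-1$, the commutator estimate (Kato--Ponce type, applied to an extension of $u$ to $\mathbb R^2$ via a bounded extension operator) gives
\[
\frac{d}{dt}\|\omega\|_{s-1}^2\leqslant K\big(\|\nabla u\|_{L^\infty}\|\omega\|_{s-1}+\|\nabla\omega\|_{L^\infty}\|u\|_{s-1}\big)\|\omega\|_{s-1}.
\]
I will refine this to the standard form $\lesssim(\|\nabla u\|_{L^\infty}+\|\omega\|_{L^\infty})\|u\|_s\|\omega\|_{s-1}$. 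Boundary terms arising from integration by parts in the normal direction are where care is needed. I expect this to be the main obstacle: one cannot integrate by parts freely in an exterior domain. To handle it, I will use the transport structure directly, i.e. $\partial^\alpha$ applied to a transported quantity in Lagrangian or body-frame coordinates with $u-v$ tangent to $\partial\mathcal S(t)$, so that $\int (u-v)\cdot\nabla|\partial^\alpha\omega|^2=0$.

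\textbf{Step 2: closing the inequality.} By Proposition~\ref{Pro:high-u}, $\|u\|_s^2\leqslant K(\|\omega\|_{s-1}^2+E_0)$, and the body terms $m|\dot h|^2+J r^2$ are bounded by $E_0(0)$. Setting $Y_s=\|\omega\|_{s-1}^2+E_0(0)$, so that $E_s\simeq Y_s$, we get
\[
Y_s'\leqslant K\big(1+\|\nabla u\|_{L^\infty}+\|\omega_0\|_{L^\infty}\big)Y_s .
\]
For $s=3$, Proposition~\ref{Pr:nabla-u-infty} gives $\|\nabla u\|_{L^\infty}\leqslant K(1+\ln^+\sqrt{Y_3})(1+\sqrt{E_1(0)}+\|\omega_0\|_{L^\infty})$. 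With $Z=1+\ln^+\sqrt{Y_3}$ we have $Z'\leqslant K(1+\sqrt{E_1(0)}+\|\omega_0\|_{L^\infty})Z$, hence $Z(t)\leqslant Z(0)e^{Kat}$ with $a=1+\sqrt{E_1(0)}+\|\omega_0\|_{L^\infty}$.

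\textbf{Step 3: concluding the bound.} Inserting this back into the general-$s$ inequality gives $\|\nabla u\|_{L^\infty}\lesssim a\,(1+\ln^+\sqrt{E_3(0)})e^{Kat}$, and Gronwall yields \eqref{energy-inequality-thm}. One point needs care: $\lambda$ in \eqref{Def:sigma} involves only $E_1(0)$ and $E_3(0)$. I will absorb $\|\omega_0\|_{L^\infty}$ through the logarithm, for instance by using $\|\omega_0\|_{L^\infty}\leqslant K\|\omega_0\|_1^{1/2}\|\omega_0\|_0^{1/2}$ in 2D. Alternatively, I will modify the argument so that the factor $(1+\ln^+\sqrt{E_3(0)})$ carries it. If neither works, I would accept the slightly different constant. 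Independence of $K_i$ from $s$ comes from the fact that the commutator constants enter only through $K_1$ and the exponent rate is fixed by the $s=3$ step. The regularity assumption $X_{s+1;\infty}$ is used to justify differentiating $\|\omega\|_{s-1}^2$ in time.
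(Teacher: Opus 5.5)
\textbf{Where the proposal matches the paper.} Your architecture is the paper's: the transport estimate for $\|\omega\|_{s-1}$, Proposition~\ref{Pro:high-u} to recover $\|u\|_s$, Proposition~\ref{Pr:nabla-u-infty} at $s=3$, and a double Gr\"onwall. Running Gr\"onwall on $1+\ln^+\sqrt{Y_3}$ is equivalent to the paper's integral inequality for $\|\nabla u\|_{L^\infty}$. Your Moser-type commutator bound, with the body-frame treatment of the transport term, is more careful than the paper's one-line summation.

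\textbf{The gap: reaching the $\lambda$ of \eqref{Def:sigma}.} The inequality $\|\omega_0\|_{L^\infty}\leqslant K\|\omega_0\|_1^{1/2}\|\omega_0\|_0^{1/2}$ is false in two dimensions. Replacing $\omega_0$ by $\omega_0(\mu\,\cdot)$ leaves the left side unchanged, while the right side tends to $0$ as $\mu\to\infty$. The correct Agmon inequality needs $\|\omega_0\|_2$ and gives only $\|\omega_0\|_{L^\infty}\leqslant KE_1(0)^{1/4}E_3(0)^{1/4}$. This is a power of $E_3(0)$, and $\|\omega_0\|_{L^\infty}$ enters your Gr\"onwall rate $a$ linearly. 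So it can be absorbed neither into $1+\ln^+\sqrt{E_3(0)}$ nor into the rate $K_3(1+\sqrt{E_1(0)})$ of the inner exponential, and neither of your two repairs works.

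The paper offers no alternative. It closes this step by asserting $\sqrt{E_0(0)}+\|\omega_0\|_{L^\infty}\leqslant K\sqrt{E_1(0)}$, which fails for the same reason: $E_1(0)$ controls $\omega_0$ only in $L^2$. What the argument actually proves is \eqref{energy-inequality-thm} with $1+\sqrt{E_1(0)}$ replaced, in both places of \eqref{Def:sigma}, by $a=1+\sqrt{E_0(0)}+\|\omega_0\|_{L^\infty}$. That is your fallback, and it is a different statement rather than a different constant.

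A scaling check suggests the difference cannot be removed. Take a concentrated vortex $u_0\approx U_0(\cdot/\delta)$, with $\curl U_0$ compactly supported, of zero mean, and located away from the body. Then $E_1(0)$ stays bounded and $\ln^+\sqrt{E_3(0)}\approx 2\ln(1/\delta)$, while the solution evolves on the time scale $\delta$. Letting $\delta\to0$, the stated bound would force $\|D^sU(\tau)\|_0\leqslant\sqrt{K_1}\,\|D^sU_0\|_0$ for all $\tau$ and all such whole-plane Euler solutions $U$. That is a universal bound on Sobolev-norm growth, which one does not expect.

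\textbf{The $s$-independence claim.} Your justification of the $s$-independence of the constants is also wrong. In $Y_s'\leqslant K(1+\|\nabla u\|_{L^\infty}+\|\omega_0\|_{L^\infty})Y_s$, the constant $K=K_s$ collects the commutator constants and the constant of Proposition~\ref{Pro:high-u}. It multiplies $\|\nabla u\|_{L^\infty}$, so after Gr\"onwall it sits in the exponent, $\exp(K_s\int_0^t\|\nabla u\|_{L^\infty}\dif\tau)$, not merely in $K_1$. The $s=3$ step bounds $\|\nabla u\|_{L^\infty}$ but does nothing to $K_s$.

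This is not an artifact of the method. Transport by the strain $(\sigma x_1,-\sigma x_2)$ gives $\|\partial_2^{s-1}\omega(t)\|_0=e^{(s-1)\sigma t}\|\partial_2^{s-1}\omega_0\|_0$ with $\|\nabla u\|_{L^\infty}\simeq\sigma$, so the Gr\"onwall rate of any such transport estimate must grow with $s$. The paper's \eqref{Est:uniform-3} carries the same $s$-dependent $K$, so its claim is likewise unsupported by its proof. What you can honestly state is the bound with $K_1$ and $K_2$ depending on $s$, and only $K_3$, which comes from the $s=3$ step, independent of $s$.
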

\begin{proof}
Let us take curl of Eq.\eqref{Equ:euler-1} to get
	\begin{equation}
		\partial_t \omega + u \cdot \nabla \omega = 0.
	\end{equation}
	For every multi-index $|\alpha| < s$, we take the $\alpha$-$th$ order partial derivative on both sides of the above equation to obtain
	\begin{equation*}
		\partial_t \partial^\alpha \omega + u \cdot \nabla \partial^\alpha \omega + \sum_{|\beta| > 0} C_{\alpha, \beta}\partial^\beta u \cdot \nabla \partial^{\alpha-\beta} \omega = 0,
	\end{equation*}
	by  multiplying the above equation with $\partial^\alpha \omega$ and integrating over $\cup_{\tau\in[0, t]}\{\tau\}\times\mathcal{F}(\tau)$, we obtain
	\begin{equation*}\label{Equ:w-partial}
		\frac{1}{2}\int_0^t\int_{\mathcal{F}(\tau)}\big[\partial_\tau |\partial^\alpha \omega|^2 + u \cdot \nabla |\partial^\alpha \omega|^2\big] = - \sum_{|\beta| > 0} C_{\alpha, \beta}\int_0^t\int_{\mathcal{F}(\tau)}\partial^\beta u \cdot \nabla \partial^{\alpha-\beta} \omega \,\partial^\alpha\omega.
	\end{equation*}
	 Now, we apply the Reynolds transport theorem to get
	 \begin{equation} \label{Equ:weak-star}
	 	\frac{1}{2}\|\partial^\alpha \omega\|_{L^2(\mathcal{F}(t))}^2 = \frac{1}{2}\|\partial^\alpha\omega_0\|_{L^2(\mathcal{F})}- \sum_{|\beta| > 0} C_{\alpha, \beta}\int_0^t\int_{\mathcal{F}(\tau)}\partial^\beta u \cdot \nabla \partial^{\alpha-\beta} \omega\,\partial^\alpha\omega,
	 \end{equation}
	 where $\omega_0 = \curl u_0$. Summing up the above equations from $|\alpha|=0$ to $|\alpha|=s-1$ and using Proposition \ref{Pr:uniform-low} and \ref{Pro:high-u}, we deduce that 
	 \begin{equation*}
	 	\begin{split}
	 E_s(t) \leqslant K\Big(\int_0^t\|\nabla u\|_{L^\infty} \|u\|_{s}^2 \dif \tau + E_s(0)\Big).
	 	\end{split}
	 \end{equation*}
	 Noting that $\|u(\tau)\|_s^2 \leqslant E_s(\tau)$, we infer from Gr\"onwall's inequality that
	 \begin{equation}\label{Est:uniform-3}
	 	\begin{split}
	 	E_s(t) \leqslant KE_s(0)\exp{K\int_0^t\|\nabla u\|_{L^\infty}\dif \tau}.
	 \end{split}
	 \end{equation}
	 
	 Then, substituting \eqref{Est:uniform-3}, for $s=3$, into \eqref{Est:infty-u}, we arrive at
	 \begin{equation}
	 	\begin{split}
	 		&\|\nabla u\|_{L^\infty} \leqslant K\Big(1+\ln^+\sqrt{E_3(0)} + \int_0^t \|\nabla u\|_{L^\infty} \dif \tau\Big)\Big(1+\sqrt{E_1(0)}\Big),
	 	\end{split}
	 \end{equation}
	 where we have used the fact that $\sqrt{E_0(0)} + \|\omega_0\|_{L^\infty} \leqslant K\sqrt{E_1(0)}$.
it follows from	 Gr\"onwall's inequality  that
\begin{equation}
\|\nabla u\|_{L^\infty} \le K\bigl(1+\sqrt{E_1(0)}\bigr)
\bigl(1+\ln^+\sqrt{E_3(0)}\bigr)
\exp\Bigl(K\bigl(1+\sqrt{E_1(0)}\bigr)\,t\Bigr).
\end{equation}
	 
	 Finally, substituting the above inequality back into \eqref{Est:uniform-3}, we obtain \eqref{energy-inequality-thm} immediately, and this completes the proof of Proposition \ref{Pr:uniform}.
\end{proof}

\section{Proof of the main theorem}

Reference \cite{wang2012} established the local existence of a unique solution for the three-dimensional case under the assumption that the initial velocity field lies in $H^s$. The bidimensional case can be handled analogously, we thus present the following local existence result without proof.
\begin{proposition}\label{Pr:local-in-time-existence}
Fix $s \geqslant 3$. Assume that $u_0 \in H^s(\mathcal{F})$ satisfies \eqref{Equ:boundary-initial}. Then there exists some $T > 0$ such that the system \eqref{Equ:euler-1}--\eqref{Equ:euler-4} admits a unique solution $(u, p, h, w) \in \mathscr X_{s;T}$.
\end{proposition}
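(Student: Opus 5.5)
The plan follows the scheme of \cite{wang2012}, transposed to two dimensions. The first move is to fix the moving domain. Set $\theta(t)=\int_0^t r$ and $Q(t)$ as in the proof of Proposition \ref{Pr:nabla-u-infty}, and write the system in the body frame $y=Q(t)^{T}(x-h(t))$. The fluid then occupies the fixed exterior domain $\mathcal F$, with unknowns
\[
\tilde u(y,t)=Q^{T}u(Qy+h,t),\quad \tilde\ell=Q^{T}h',\quad \tilde r=r.
\]
These satisfy a modified Euler system on $\mathcal F$: there is an extra transport term $-\big((\tilde\ell+\tilde r y^\perp)\cdot\nabla\big)\tilde u$ and a rotation term $\tilde r\tilde u^\perp$. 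The boundary condition becomes $\tilde u\cdot n=(\tilde\ell+\tilde r y^\perp)\cdot n$ on $\partial\mathcal S$, and Newton's laws become ODEs for $(\tilde\ell,\tilde r)$ driven by boundary integrals of $\tilde p$.

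The second step is to eliminate the pressure. The body accelerations enter the pressure linearly, and this is handled through the added-mass matrix. Write
\[
\tilde p=\mu+\tilde\ell'\cdot\Phi_{1,2}+\tilde r'\Phi_3,
\]
where $\Phi_i$ are the Kirchhoff potentials: harmonic in $\mathcal F$, with $\partial_n\Phi_i$ equal to the components of $(n,y^\perp\cdot n)$, and decaying at infinity. The function $\mu$ solves a Neumann problem whose data are quadratic in $(\tilde u,\tilde\ell,\tilde r)$. Substituting into the ODEs gives
\[
(M+M_a)(\tilde\ell',\tilde r')=F(\tilde u,\tilde\ell,\tilde r),
\]
where $M_a$ is the symmetric positive semidefinite added-mass matrix, so $M+M_a$ is invertible. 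The Neumann problems in the exterior domain are solved in $\dot H^{s}$ with the same cutoff and localisation argument used in Proposition \ref{Proposition:Poisson-2}. This produces a closed evolution system for $(\tilde u,\tilde\ell,\tilde r)$ on a fixed domain, with a Lipschitz nonlinearity in the relevant norms.

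The third step is the iteration. Given $(\tilde u^n,\tilde\ell^n,\tilde r^n)$ in a ball of $C([0,T];H^s)\times C([0,T];\mathbb R^3)$, solve the linear transport equation for the vorticity, $\partial_t\omega+(\tilde u^n-\tilde v^n)\cdot\nabla\omega+\dots=0$. This transport field is tangent to $\partial\mathcal S$, so no boundary condition on $\omega$ is needed and $H^{s-1}$ estimates follow exactly as in the proof of Proposition \ref{Pr:uniform}. Next, recover the velocity from $\omega$ and the boundary data: use the corrector $\Lambda$ of Lemma \ref{Lm:boundary-corrector} together with Proposition \ref{Proposition:Poisson} and Proposition \ref{Proposition:Poisson-2}, plus the circulation, which is conserved by Kelvin's theorem and fixes the harmonic part. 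Finally, update $(\tilde\ell,\tilde r)$ from the ODE. The resulting uniform $H^s$ bounds on a short interval $T$, which depends only on $E_s(0)$, give a bounded sequence. A contraction in the lower norm $L^2\times\mathbb R^3$ then gives convergence, obtained from an energy estimate on differences in the spirit of Proposition \ref{Pr:uniform-low}. Time continuity in $H^s$ is recovered by the standard weak continuity plus norm continuity argument. Uniqueness follows from the same $L^2$ difference estimate and Gr\"onwall.

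I expect the main obstacle to be the pressure and added-mass step in the unbounded exterior domain. The potentials $\Phi_i$ and $\mu$ have only $O(|y|^{-1})$ decay, so they are not in $L^2$. One therefore has to verify that only $\nabla\Phi_i$, which lies in $H^{s}$, enters both the estimates and the boundary integrals, and that the quadratic Neumann data have enough decay for $\nabla\mu\in H^{s-1}$. My first attempt is to split the data with the cutoff $\varphi$: the near-body part is handled by bounded-domain elliptic theory as in Step 2 of Proposition \ref{Proposition:Poisson-2}, and the far part by whole-plane Calder\'on--Zygmund estimates.
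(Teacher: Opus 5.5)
The paper gives no argument for this proposition beyond deferring to \cite{wang2012}, so your outline has to stand on its own. The body-frame and added-mass steps are standard, apart from the decay issue you already flag. The third step, however, has a genuine gap, and it is the very obstruction the paper is built to avoid.

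You recover the velocity from the transported vorticity using Propositions \ref{Proposition:Poisson}--\ref{Proposition:Poisson-2}, plus a harmonic part fixed by the circulation. Neither ingredient works here. Proposition \ref{Proposition:Poisson} requires $\omega$ to be compactly supported, and its constant $K_R$ necessarily grows with $R$. Proposition \ref{Proposition:Poisson-2} takes $\|\nabla\psi\|_0$ as an input. So neither produces an $L^2$ velocity from an arbitrary $\omega\in H^{s-1}(\mathcal F)$. In fact, for arbitrary such $\omega$ none exists. An $L^2$ field satisfies $\liminf_{R\to\infty}|\oint_{|y|=R}u\cdot\tau|=0$, and this fails if $\int_{\mathcal C_R}\omega\to\infty$.

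Nor is there a harmonic part for Kelvin's theorem to fix. By Lemma \ref{lm:harmoinc}, an $L^2$ field is determined by its curl and its normal trace. The circulation field behaves like $y^\perp/|y|^2$ and is not square integrable. Hence the circulation of an $L^2$ field is slaved to $\omega$: up to orientation it equals $-\int_{\mathcal F}\omega$ when $\omega\in L^1$.

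Consequently your uniform $H^s$ bound on the iterates has no $L^2$ component. In the paper, $\|u\|_0$ comes from energy conservation (Proposition \ref{Pr:uniform-low}), which a reconstructed iterate does not satisfy. What is missing is a proof that the transported vorticity is the curl of a controlled $L^2$ field.

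One way to supply it is the Cauchy formula. Let $Y^n(t)$ be the inverse flow of the tangent, divergence-free transport field. Since $\det\nabla Y^n=1$, you get $\omega^{n+1}=\omega_0\circ Y^n=\curl\big((\nabla Y^n)^{T}(u_0\circ Y^n)\big)$. You may then set $u^{n+1}=\mathbb P\big((\nabla Y^n)^{T}(u_0\circ Y^n)\big)+\tilde\ell_1\nabla\Phi_1+\tilde\ell_2\nabla\Phi_2+\tilde r\nabla\Phi_3$, where $\mathbb P$ is the Helmholtz projection onto $L^2_\sigma(\mathcal F)$. This gives $\|u^{n+1}\|_0\leqslant K\big(\|\nabla Y^n\|_{L^\infty}\|u_0\|_0+|\tilde\ell|+|\tilde r|\big)$, and the higher norms follow from \eqref{Estimate:u-s}. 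Alternatively, iterate on the velocity equation itself.

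A second gap concerns your convergence and uniqueness arguments. The global rigid change of variables $y=Q^T(x-h)$ introduces the unbounded coefficient $\tilde r\,y^\perp$. This is harmless in the $H^s$ estimate for a single iterate, since $\div y^\perp=0$ and $\nabla y^\perp$ is constant. In the difference of two iterates, or of two solutions, with different $\tilde r$, it produces sources such as $\delta\tilde r\,\big((y^\perp\cdot\nabla)\tilde u-\tilde u^\perp\big)$ or $\delta\tilde r\,(y^\perp\cdot\nabla)\omega$. These lie in $L^2$ only under a weighted condition like $|y|\nabla\tilde u\in L^2$, which $u_0\in H^s$ does not provide. So the $L^2\times\mathbb R^3$ Gr\"onwall argument does not close.

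Moreover, an energy estimate ``in the spirit of Proposition \ref{Pr:uniform-low}'' presupposes that the iterates satisfy a velocity equation. Iterates defined by reconstruction from the vorticity do not. The usual remedy is a change of variables that is rigid near $\mathcal S$ and the identity outside a bounded set, for example the flow of a compactly supported divergence-free field such as $\Lambda$ from Lemma \ref{Lm:boundary-corrector}. Then all transformed coefficients are bounded and compactly supported, and the far fields of different iterates are compared without being rotated.
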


We next show that the solution can be extended globally.

\begin{proposition}\label{Pr:app-global-existence}
	With the assumptions of Proposition \ref{Pr:local-in-time-existence}, the system \eqref{Equ:euler-1}--\eqref{Equ:euler-4} admits a unique global solution $(u, p, h, r) \in \mathscr X_{s;\infty}$. Moreover, this solution satisfies the energy inequality \eqref{energy-inequality-thm}.
\end{proposition}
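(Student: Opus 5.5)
The argument is the standard continuation argument. It combines the local existence result (Proposition \ref{Pr:local-in-time-existence}) with the a priori bound of Proposition \ref{Pr:uniform}.

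Let $T^*$ denote the supremum of all $T>0$ for which a solution $(u,p,h,r)\in\mathscr X_{s;T}$ exists. By Proposition \ref{Pr:local-in-time-existence}, $T^*>0$. Local uniqueness, applied on overlapping intervals, lets us glue local solutions into a single maximal solution on $[0,T^*)$. Suppose, for contradiction, that $T^*<\infty$.

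\textbf{Step 1: uniform bound on $[0,T^*)$.} The energy inequality \eqref{energy-inequality-thm} was derived in Proposition \ref{Pr:uniform} under the assumption that the solution lies in $\mathscr X_{s+1;\infty}$; this extra derivative is needed to justify differentiating the vorticity equation $s-1$ times and applying Reynolds' theorem. To remove it, I would approximate $u_0$ by smooth data $u_0^k\in H^{s+1}(\mathcal F)$ satisfying the compatibility condition \eqref{Equ:boundary-initial}. A natural choice is to mollify $\curl u_0$ and reconstruct the velocity via the stream function of Section 3 plus the boundary corrector $\Lambda$ at time $0$, keeping $\ell_0,r_0$ fixed.

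The resulting solutions satisfy \eqref{energy-inequality-thm} on their common existence interval. Their $H^s$ norms are controlled by $E_s(0)$, $E_1(0)$ and $E_3(0)$ of the approximating data, which converge to those of $u_0$. Using continuous dependence in $H^s$ from the local theory, we pass to the limit and obtain \eqref{energy-inequality-thm} for the $H^s$ solution itself. In particular,
\[
\sup_{t<T^*}E_s(t)\leqslant K_1E_s(0)\exp\int_0^{T^*}\lambda(\tau)\dif\tau<\infty .
\]

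Alternatively, one can avoid the regularization by a commutator estimate that works directly at regularity $s$. In that approach the term $\partial^\beta u\cdot\nabla\partial^{\alpha-\beta}\omega\,\partial^\alpha\omega$ with $|\alpha|\leqslant s-1$ involves at most $s$ derivatives of $u$. The identity \eqref{Equ:weak-star} is then justified by a density argument in time.

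\textbf{Step 2: extension past $T^*$.} The bound from Step 1 makes $|h'|$ and $|r|$ bounded, so $h$ is Lipschitz and $\mathcal F(t)$ stays a rigid motion of $\mathcal F$. Hence all constants in Section 3 are uniform in time, as already used in the proofs of Propositions \ref{Pr:nabla-u-infty} and \ref{Pro:high-u}.

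The key point is that the local existence time in Proposition \ref{Pr:local-in-time-existence} depends only on an upper bound for the $H^s$ norm of the data together with $|\ell_0|$ and $|r_0|$. The body shape is fixed after transforming to the body frame $x=Q(t)y+h(t)$. So we may choose $t_0=T^*-\delta/2$, where $\delta$ is the uniform local existence time for data of size $\sup_{t<T^*}E_s(t)$. Restarting from $(u(t_0),h'(t_0),r(t_0))$, rewritten in the body frame so that the body again occupies $\mathcal S$, yields a solution on $[t_0,t_0+\delta)$. By uniqueness it agrees with the maximal solution, and it extends beyond $T^*$, a contradiction. Therefore $T^*=\infty$. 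Global uniqueness follows from local uniqueness on every finite interval.

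\textbf{Main obstacle.} The delicate step is Step 1, passing the a priori estimate from $\mathscr X_{s+1}$ to $\mathscr X_{s}$ solutions. This needs an approximation of the data compatible with the boundary condition \eqref{Equ:boundary-initial}, together with continuity of the data-to-solution map in $H^s$. Statement of that continuity is not recorded in Proposition \ref{Pr:local-in-time-existence}, and I would extract it from the proof in \cite{wang2012}. A secondary point is to confirm that the local existence time from \cite{wang2012} depends only on the size of the data. This is what makes the restart in Step 2 uniform.
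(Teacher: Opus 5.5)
Your Step 2 is fine. It spells out the continuation argument that the paper leaves implicit when it says it suffices to verify \eqref{energy-inequality-thm}, and it relies on the same implicit fact: the existence time is controlled by the size of the data.

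The gap is in Step 1. To invoke Proposition \ref{Pr:uniform} for the solutions with data $u_0^k\in H^{s+1}$, you need them to lie in $\mathscr X_{s+1;T}$ on an interval $[0,T]$ independent of $k$. The level-$s$ local theory gives a uniform interval, but only in $\mathscr X_{s}$, where Proposition \ref{Pr:uniform} does not apply. The level-$(s+1)$ theory gives membership in $\mathscr X_{s+1}$, but only for a time guaranteed in terms of $\|u_0^k\|_{H^{s+1}}$. That norm cannot stay bounded as $k\to\infty$ unless $u_0\in H^{s+1}$.

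Bridging the two requires persistence of $H^{s+1}$ regularity on the $H^s$ lifespan, i.e.\ an $E_{s+1}$ bound for $\mathscr X_{s+1}$ solutions. By the argument of Proposition \ref{Pr:uniform}, such a bound is only justified for $\mathscr X_{s+2}$ solutions. So Step 1 meets exactly the difficulty it is meant to remove, one derivative higher. This property is not in Proposition \ref{Pr:local-in-time-existence}, and it is not among the items you plan to import from \cite{wang2012} (continuous dependence, size-dependent existence time).

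Continuous dependence is in fact not what you need. Uniform bounds, weak compactness, uniqueness and lower semicontinuity of the norm would already pass the inequality to the limit. Your alternative does not close the gap either. The lost derivative is spatial, in $u\cdot\nabla\partial^\alpha\omega$ with $|\alpha|=s-1$, so a density argument in time does not touch it. You would need a spatial Friedrichs regularization with commutator estimates, adapted to the moving boundary $\partial\mathcal S(t)$.

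The paper avoids all of this by never regularizing the nonlinear problem. It keeps the velocity $u$ of the given $\mathscr X_{s;T}$ solution fixed, extended to $\mathbb R^2$. It chooses $\omega_0^\eps\in H^s(\mathbb R^2)$ with $\omega_0^\eps\to\omega_0$ in $H^{s-1}$, and solves the \emph{linear} transport equation $\partial_t\omega^\eps+u\cdot\nabla\omega^\eps=0$. Since $u\in H^s$ with $s\geqslant 3$ is Lipschitz, linear transport theory gives $\omega^\eps\in C([0,T];H^s)$. Hence the Reynolds computation behind \eqref{Equ:weak-star} is legitimate for $\omega^\eps$ at every order $|\alpha|\leqslant s-1$. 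Its right-hand side involves only $\partial^\beta u$ with $|\beta|\leqslant s-1$ and at most $s-1$ derivatives of $\omega^\eps$.

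Uniform $H^{s-1}$ bounds and $\omega^\eps\to\omega$ then allow passage to the limit via weak-$*$ compactness and Fatou. This gives \eqref{Equ:weak-star} as an inequality, which is all the Gr\"onwall argument needs. The approach uses nothing about the coupled system beyond existence and uniqueness in $\mathscr X_{s;T}$, and it needs no compatible approximate data. Applied one level up, it would also supply the persistence statement your route lacks, at which point your data approximation becomes superfluous.
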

\begin{proof}
	To complete the proof, it suffices to verify \eqref{energy-inequality-thm}. It can be seen that the solution $(u, p, h, r)$ in Proposition \ref{Pr:uniform} was assumed to belong to $\mathscr X_{s+1; T}$ so that the integration in \eqref{Equ:w-partial} is well-defined. To address this issue, we need to construct a family of approximate solutions. 
	
	First, let us extend $u$ and $u_0$ to be vector fields on $\mathbb{R}^2$ such that their $H^s$ norms remain bounded. We denote by $\omega_0$ the curl of $u_0$, and let $\{\omega_0^\eps\}_{\eps >0} \subset H^s(\mathbb{R}^2)$ be a family of approximations for $\omega_0$ such that
	 \[\omega_0^\eps \to \omega_0 \text{ in } H^{s-1}(\mathbb{R}^2), \text{ as } \eps \to 0. \]
	
	Then, we infer from \cite{diperna1989ordinary,bahouri2011fourier} that there exists a unique solution \(\omega^\eps \in C([0, T]; \allowbreak  H^{s}(\mathbb{R}^2))\) to the following transport equation:
	\begin{equation}
		\begin{split}
			\partial_t \omega^\eps + u\cdot \nabla \omega^\eps = 0,\\
			\omega^\eps|_{t=0} =   \omega_0^\eps.
		\end{split}
	\end{equation}
	 For every multi-index $|\alpha| \leqslant s-1$, we deduce from the Reynolds transport theorem that
\begin{equation*} \label{Equ:rey}
	\begin{split}
	\frac{1}{2}\|\partial^\alpha &\omega^\eps\|_{L^2(\mathcal{F}(t))}^2 = \frac{1}{2}\|\partial^\alpha \omega_0^\eps\|_{L^2(\mathcal{F})}- \sum_{|\beta| > 0} C_{\alpha, \beta}\int_0^t\int_{\mathcal{F}(t)}\partial^\beta u \cdot \nabla \partial^{\alpha-\beta} \omega^\eps\partial^\alpha\omega^\eps.
	\end{split}
\end{equation*}

Finally, noting that $\omega^\varepsilon$ is uniformly bounded in $C\big([0,T]; H^{s-1}(\mathcal{F}(t))\big)$ and converges to $\omega$ in $C\big([0, T]; L^2(\mathcal{F}(t))\big)$, we apply the Banach-Alaoglu theorem and Fatou’s lemma to pass to the limit, obtaining \eqref{Equ:weak-star} with the equality replaced by an inequality. The energy bound \eqref{energy-inequality-thm} then follows from the remainder of the proof of Proposition \ref{Pr:uniform}.
\end{proof} 

\section{Acknowledgments}
	The work of X.~You is supported by the Jiangxi Provincial Natural Science Foundation of China (Grant No.~20242BAB25008) and the National Natural Science Foundation of China (Grant No.~12561038).



  \bibliographystyle{elsarticle-num} 
  \bibliography{my}





\end{document}